\documentclass[a4paper,10pt]{article}
\usepackage[english]{babel}
\usepackage{amssymb}
\usepackage{amsmath}
\usepackage{indentfirst}
\usepackage[dvips]{graphicx}
\usepackage{epsfig}
\usepackage{lscape}
\usepackage{hyperref}
\usepackage{color}
\usepackage{soul}
\usepackage[dvipsnames]{xcolor}
\usepackage{amsthm}
\allowdisplaybreaks

\newtheorem{defi}{Definition}[section]
\newtheorem{cor}{Corollary}[section]

\newtheorem{lemma}[cor]{Lemma}
\newtheorem{teo}[cor]{Theorem}
\newtheorem{algo}{Algorithm}[section]
\newtheorem{rem}[cor]{Remark}

\newtheorem{ass}[cor]{Assumption}

\newcommand{\R}{\mathbb{R}}
\newcommand{\N}{\mathbb{N}}

\def\mbN{\mathbb{N}}

\def\bbb{\boldsymbol}

\def\mathref#1{\ifmmode\mathrm{(\ref{#1})}\else{\rm(\ref{#1})}\fi} 
\def\nref#1{\ifmmode\mathrm{\ref{#1}}\else{\rm\ref{#1}}\fi}

\def\eoc{{\rm eoc}}

\def\Bapprox{B_L}
\def\proj{\tilde{P}}
\def\funch{h}

\addtolength{\hoffset}{-1.7cm}
\addtolength{\textwidth}{2.8cm}
\addtolength{\voffset}{-1.5cm}
\addtolength{\textheight}{3.5cm}
\numberwithin{equation}{section}

\title{A surface finite element scheme \\
for a stochastic PDE on an evolving curve}

\author{Paola Pozzi \thanks{Fakult\"at f\"ur Mathematik, Universit\"at Duisburg-Essen,
Thea-Leymann-Stra\ss e 9,
45127 Essen, Germany, \url{paola.pozzi@uni-due.de}} 
and Bj\"orn Stinner \thanks{Mathematics Institute,
Zeeman Building, 
University of Warwick,
Coventry CV4 7AL, 
United Kingdom, \url{Bjorn.Stinner@warwick.ac.uk}}
}

\begin{document}

\maketitle

\begin{abstract}
 In this paper we consider an ESFEM method for the advection and diffusion of a scalar quantity on a moving closed curve.
 The diffusion process is controlled by a forcing term that may include a rough term (specifically a stochastic noise) which in particular destroys the classical time differentiability properties of the solution. We provide a suitable variational solution concept and a fully discrete FEM discretization. Our  error analysis  appropriately generalizes classical estimates to this weaker setting. We present some numerical simulations that confirm our theoretical findings.
\end{abstract}

\bigskip
\noindent \textbf{Keywords}: variational SPDEs, surface finite elements, advection diffusion, moving curve, error analysis 
\bigskip
 
\noindent \textbf{MSC(2020)}: 60H35, 65M60, 65M15


\section{Introduction}

In this work, we study the numerical approximation of a stochastic partial differential equation (SPDE) of the form
\begin{equation} \label{eq:modeleq}
 dc = \Big{(} -c \frac{|u_x|_t}{|u_x|} + D\frac{1}{|u_x|} \Big{(} \frac{c_x}{|u_x|} \Big{)}_x + w_{T} \frac{c_x}{|u_x|} + r(c) \Big{)} dt + \frac{1}{|u_{x}|} B(c) dW
\end{equation}
using finite elements in space and a semi-implicit Euler-Maruyama time discretization and derive convergence results in expectation subject to suitable regularity assumptions on the solution. Here, $c$ stands for a field on a given evolving closed curve that is parametrised by $u$, $D$ is a diffusion coefficient, $w_{T}$ an advective velocity, $r(c)$ a deterministic reaction and source, and $B(c) dW$ a stochastic reaction and source that involves a (cylindrical) Wiener process $dW$.

Stochastic partial differential equations are used in various application areas including fluids, finance, and population dynamics, see the introduction in \cite{Pardoux} for some examples and \cite{SigKueSta2015,KraGomBakEtAl2018} for modelling aspects. Our study is specifically motivated by the model for cell motility in \cite{EllStiVen2012}. There, noise was added to the problem after discretization to accelerate a cell's decisions about its motion direction in order to fit the simulations with experimental data. However, a continuous model framework was missing. We here address this questions with respect to the biochemistry, which is described by systems of reaction diffusion equations in \cite{EllStiVen2012} that, with suitable noise terms, would be of the form \eqref{eq:modeleq}. 

Computational methods for SPDEs such as \eqref{eq:modeleq} often are based on method-of-lines type approaches.
The resulting systems of stochastic ordinary differential equations then can be solved with standard approaches, where Euler-Maruyama and Milstein are very popular (see \cite{Pla1999} for an introduction and overview). The integration of space and time discretization is discussed in \cite{Powell}, noting that the theory there is developed for semi-linear equation that allow for mild solutions. For more general results on the numerical approximation of SPDEs, specifically nonlinear SPDEs, we refer to \cite{OndProWal2023}. Here, we consider the approximation of the evolving curve by a polygon obtained by interpolating $u$ with standard piece-wise linear continuous finite element functions, so that also the operators are approximated only. This procedure is motivated by the fact that the coupled problem in \cite{EllStiVen2012} involves finding the evolving curve, too, for which finite elements are used.

In their seminal paper \cite{DE07}, the authors provide a finite element method to approximate PDEs on given, evolving hypersurfaces, where the impact of the approximation of the surface is discussed. In particular their weak solution is weakly differentiable in time. However, rough sources or reactions typically destroy the regularity properties, specifically with respect to time to the extent that a time derivative does not exist any more. We follow ideas and concepts in \cite{RoecknerBuch}, where a time-integrated version of the SPDE is used so that no time derivatives appear any more. Moreover, a variational setting is employed that is amenable to finite elements. Well-posedness is guaranteed and includes a stability estimate \emph{on average} (in expectation), and thus provides a framework within which a numerical approximation can be endeavoured. Our objective is to provide a finite element (FE) analysis that relies on a minimum of regularity in the spatial variable as provided by the natural stability estimate, namely the existence of a first weak derivative only.

With regards to the time regularity we assume as in \cite{BDE} that the (stochastic) forcing terms still allow for a solution for which some H\"{o}lder-type estimates are fulfilled in expectation. However, our aim is also to ensure that, if the solution happens to be sufficiently regular so that a time derivative exists, then classical estimates such as in \cite{DE07,DE12,DEL2} can be retrieved. For instance, our spatial analysis uses standard interpolation estimates and a Ritz projection so that an improvement of the convergence rate in the spatial step size is in reach. Nevertheless, as a byproduct of our time-integrated approach, no concept of a time derivative (such as a material derivative) is required in the deterministic case.

Detail on the SPDE \eqref{eq:modeleq}, the variational framework, the a-priori estimate, and the regularity assumptions are presented and discussed in Section 2. The finite element approximation and the time stepping scheme are introduced in Section 3. Our novel scheme is fully discrete and linear and consistent with previous approaches in that it would result in schemes from the literature if the noise term was trivial ($B=0$).
We then discuss some results such as a suitable Ritz projection and state and prove the main convergence results. With respect to the $L^2$ norm in space and in expectation we provide upper estimates for convergence including rates in terms of the spatial step size, the time step size, the approximation of the stochastic noise term, and the approximation of the initial data. Our main result is summarised in Theorem \ref{err-est-g}.
To balance some terms arising from the (lack of) time regularity we have to choose the spatial step size proportional to the time step size, though. This assumption can be dropped if the solution happens to have better regularity properties, which we discuss in detail at the end of Section 3.
We finish the paper with some computational results in Section 4.

\section{Variational Stochastic SPDE on an Evolving Curve}

\subsection{A PDE on a moving curve}
In the following, we use the mathematical language typical for treatment of problems formulated on hypersurfaces in $\R^{n}$ as the questions we explore can be asked also in this more general setting. However, we start our investigation in the framework of moving curves in order to avoid the challenges posed by the higher dimensional setting.

Consider a family of planar embedded 
closed curves $\{\Gamma(t)\}_{t \in [0,T]}$.
The (time dependent) curvature, unit tangent and normal vectors are denoted by $\kappa(t)$, $\tau(t)$, $\nu(t)$ respectively. We suppose that the curves move with a velocity
$$ v = v_{T} + v_{\nu} $$
where $v_{\nu}$ denotes the normal component of the velocity vector $v$ (and thus of the evolving curve $\Gamma$) and $v_{T}$ stands for the tangential component.

For a scalar field $c(t)$ on $\Gamma(t) \subset \R^{2}$ for all $t$ we consider the following partial differential equation:
\begin{equation}\label{eq0}
 \partial_{t}^{\bullet (v)} c + c \nabla_{\Gamma} \cdot v - \triangle_{\Gamma} c = w \cdot \nabla_{\Gamma} c + r(c) \qquad \text{ on } \Gamma(t) \, \,\forall t.
\end{equation}
Here, $\partial_{t}^{\bullet (v)} c = \partial_{t} c + v \cdot \nabla c$ is the material (time) derivative with respect to the velocity $v$, $\nabla_{\Gamma}$ is the (spatial) surface gradient on $\Gamma(t)$ (the $t$-dependence is dropped for a shorter notation), $\triangle_{\Gamma}$ is the Laplace-Beltrami operator, $w$ is a sufficiently smooth given advection field.
Finally, $r : \R \to \R$ is a reaction term in which we will incorporate a source of noise leading to a lack of regularity later on in Section \ref{sec:stochset}.
In addition, we impose the initial condition
\begin{equation} \label{eq0ic}
 c(0,y)=c_{0}(y) \quad \forall y \in \Gamma(0)
\end{equation}
with sufficiently smooth given initial data $c_{0} : \Gamma(0) \to \R$.

The equation \eqref{eq0} can be understood as a balance equation for a material quantity that is transported with the curve, by diffusion and with an additional material velocity along the curve, and is subject to reactions.

For smooth PDEs on moving surfaces without random terms it has proved convenient to consider suitable Sobolev spaces on the evolving surface (see for instance \cite{ranner} and references given in there; see also \cite{DEKR} for PDEs with {\it mild random} terms). In our context, however, it is preferable to transform \eqref{eq0} to a spatial domain that is independent of time. For this purpose, we assume that $\Gamma(t) = u(t,S^{1})$ is parametrized by a smooth given map $u: I \times S^{1} \to \R^{2}$, where $I = [0,T]$ for some $T \in (0,\infty)$. The family of parameterizations is assumed regular in the sense that
\begin{align}\label{regularity}
 0<d_{0} \leq |u_{x}(t,x)| \leq \frac{1}{d_{0}} \qquad \text{ uniformly on } [0,T] \times S^{1}
\end{align}
for some $d_0\in\R^{+}$. Moreover, we assume that $u(t)$ is an embedding at every time and that
\[
 v(t,u(t,x)) = \partial_t u(t,x) \quad \forall (x,t) \in [0,T] \times S^{1}.
\]
The surface PDE \eqref{eq0} then becomes
\begin{equation}\label{eq1}
c_t + c \frac{|u_x|_t}{|u_x|} - \frac{1}{|u_x|} \Big{(} \frac{c_x}{|u_x|} \Big{)}_x = w_{T} \frac{c_x}{|u_x|} + r(c)
\end{equation}
where $w_{T} = w \cdot \frac{u_x}{|u_x|}$ and where we, with a slight abuse of notation, used $c : [0,T] \times S^{1} \to \R$ again to denote the same field but on the fixed spatial domain now, thus writing $c(t,x)$ for $c(t,u(t,x))$.
For a variational formulation we introduce the following Gelfand triple: let
\begin{align}\label{VH}
V:= W^{1,2}(S^{1}, \R), \qquad H := L^{2}(S^{1}, \R)
\end{align}
denote the usual Sobolev spaces that are periodic on $[0,2 \pi]\simeq S^{1}$. We note that $V \subset H \simeq H' \subset V'$, with $V$ densely and compactly embedded in $H$. 
The inner products on $H$ and $V$ are denoted by $\langle \cdot, \cdot \rangle_{H}$ and $\langle \cdot, \cdot \rangle_{V}$ respectively.

Multiplying equation \eqref{eq1} with $|u_x|$ and a test function $\varphi \in V$, and then integrating with respect to the spatial variable we obtain that
\begin{equation}\label{eq2}
\frac{d}{dt} \Big{(} \langle c |u_{x}|, \varphi \rangle_{H} \Big{)}
+ \langle \frac{c_{x}}{|u_{x}|}, \varphi_{x} \rangle_{H}
= - \langle c w_{T}, \varphi_{x} \rangle_{H} - \langle c \partial_x w_{T}, \varphi\rangle_{H} + \langle r(c) |u_{x}|, \varphi \rangle_{H}.
\end{equation}
Note that only first order spatial derivatives feature, thus enabling the use of standard continuous, piece-wise linear finite elements. These will be used not only for $c$ but also for the parametrization $u$, which corresponds to the approximation of the evolving curve by a polygon. In the following, we absorb the term $c \partial_x w_{T}$ into the reaction term $r(c)$.

The noise in the reaction term will lead to a loss in regularity such that $c$ has no time derivative any more. A suitable weak formulation is obtained by integrating with respect to time from $0$ to an end time denoted with $t$ again:
\begin{multline}\label{eq3}
\langle c(t) |u_{x}(t)|, \varphi \rangle_{H} - \langle c_{0} |u_{x}(0)|, \varphi \rangle_{H} + \int_{0}^{t} \langle \frac{c_{x}(t')}{|u_{x}(t')|}, \varphi_{x} \rangle_{H} dt'
\\
= \int_{0}^{t} - \langle c(t') w_{T}(t'), \varphi_{x} \rangle_{H} + \langle r(c(t')) |u_{x}(t')|, \varphi \rangle_{H} dt'
\end{multline}
for all $t \in [0,T]$ and test functions $\varphi \in V$.

\begin{rem} We remark the following on the above weak formulation:
\begin{itemize}
 \item
 If the test functions $\varphi$ are differentiable functions of time then the weak formulation of the ESFEM (Evolving Surface Finite Element Method) from \cite[Definition 4.1]{DE07} can be obtained, which in our case with the parametrization reads
 \begin{equation}\label{eq2_time}
 \frac{d}{dt} \Big{(} \langle c |u_{x}|, \varphi \rangle_{H} \Big{)} -\langle c |u_x|, \varphi_t \rangle_{H}
 + \langle \frac{c_{x}}{|u_{x}|}, \varphi_{x} \rangle_{H}
 = - \langle c w_{T}, \varphi_{x} \rangle_{H} + \langle r(c) |u_{x}|, \varphi \rangle_{H}.
 \end{equation}

 \item
 In \cite{DEKR} the ESFEM is extended to random evolving surface finite element methods for the advection-diffusion equation
 $$ \partial^{\bullet(v)}_{t} c + c \nabla_{\Gamma} \cdot v -\nabla_{\Gamma} \cdot (\alpha \nabla_{\Gamma} c) = f $$
 on an evolving compact hypersurface $\Gamma(t)$ in $\R^{n}$, where $\alpha$ is a uniformly bounded random coefficient and $v$ still deterministic. The random coefficient $\alpha$ is such that (see \cite[Assumption 2.1]{DEKR}) the concept of path-wise material derivative $\partial^{\bullet(v)}_{t}$ is still applicable to the solution in the weak formulation. \\
 In our setting, a solution (more precisely a sample path $t \to c(\omega, \cdot) $) usually does not admit any weak derivative in time, hence the (necessity of an) integrated formulation.
\end{itemize}
\end{rem}

\subsection{Reactions with noise}
\label{sec:stochset}

Our approach to the noisy reaction term is based on Wiener processes in Hilbert spaces. We follow the variational approach to SPDEs along the lines of \cite{RoecknerBuch} but, for further background, detail, and proofs, also refer to \cite{Powell,DaPratoBuch}.

To account for stochastic effects we neglect any deterministic component (since for these the error analysis is well known) and rewrite the reaction term (last term $\int_{0}^{t} \langle r(c(t')) |u_{x}(t')|, \varphi \rangle_{H} dt'$ in \eqref{eq3}) as a stochastic integral in the form
\begin{equation} \label{def_stochreact}
 \int_0^t \langle B(c(t')) dW(t'), \varphi \rangle_{H}.
\end{equation}
For the definition of such integrals it is convenient to introduce another separable Hilbert space $U$ with a orthonormal basis $(g_l)_{l \in \mbN}$, and to consider bounded, linear Hilbert-Schmidt operators $\Phi \in L_{2}(U,H)$, which satisfy $\| \Phi \|_{L_{2}(U,H)}^2 = \sum_{l \in \mbN} \| \Phi g_l \|_H^2 < \infty$.

We assume that $W(t)$, $t \in [0,T]$ is a cylindrical $Q$-Wiener process with $Q:=I$ taking values in $U$ and being defined on a complete probability space $(\Omega,\mathcal{F}, \mathbb{P})$ with normal filtration $\mathcal{F}_{t}$, $t \in [0,T]$. In particular, $W$ has the representation
\begin{equation} \label{repW}
 W(t) = \sum_{l \in \N} g_{l} \beta_{l}(t)
\end{equation}
where the $(\beta_{l})_{l \in \N}$ are mutually independent real-valued $\mathcal{F}_{t}-$adapted Brownian motions.

We furthermore assume that
\begin{equation} \label{OPB}
B : V \to L_{2}(U,H)
\end{equation}
is a continuous map with the following properties:
\begin{itemize}
 \item
 there exist constants $ C_{B} \geq 0$
 such that for all $c \in V$ 
 \begin{align} \label{BH3}
 \| B(c) \|^{2}_{L_{2}(U,H)} \leq C_{B}(1 + \| c \|^{2}_{H}).
 \end{align}
 \item
 $B$ is weak monotone: there exists $C_{BM}\geq 0$ such that for all $c_{1}, c_{2} \in V$
 \begin{align}\label{BH2}
 \| B(c_{1}) - B(c_{2}) \|^{2}_{L_{2}(U,H)} \leq C_{BM}\| c_{1} - c_{2} \|^{2}_{H}.
 \end{align}
\end{itemize}
Note that by \eqref{BH3}, $B$ satisfies also the following growth bound:
\begin{align}\label{BH4}
 \| B( c) \|^{2}_{L_{2}(U,H)} \leq C_{B} (1 + \| c \|^{2}_{H}) \leq C_{B} (1 + \| c \|^{2}_{V})
\end{align}
for all $c \in V$.
Consequently, \eqref{def_stochreact} is well-defined and can be written as
\begin{equation} \label{def_stochreact2}
 \int_0^t \langle B(c(t')) dW(t'), \varphi \rangle_{H} = \sum_{l \in \mbN} \int_0^t \langle B(c(t')) g_l, \varphi \rangle_{H} d \beta_l(t').
\end{equation}

Motivated by the variational formulation \eqref{eq3} of a PDE on a moving curve and accounting for the definition \eqref{def_stochreact} of the noisy reaction term we define solutions as follows:

\begin{defi} \label{def_solution}
Let
$V$, $H$ be as in \eqref{VH}. Let $W(t)$, $t \in [0,T]$, be a cylindrical Wiener process as defined around \eqref{repW}.
Moreover, let $c_{0} \in L^{2}(\Omega; H)$ be $\mathcal{F}_{0}$-measurable. A continuous $H$-valued $\mathcal{F}_t$-adapted process $(c(t))_{t\in [0,T]}$ with $c \in L^{2}(\Omega;L^{2}((0,T);V)) \cap L^{2}(\Omega;C^{0}([0,T],H))$ is called a solution of \eqref{eq0} with stochastic reaction term \eqref{def_stochreact2} and of \eqref{eq0ic} if
\begin{multline} \label{eq:def_solution}
\langle c(t) |u_x|, \varphi \rangle_{H} - \langle c_{0} |u_x(0)|, \varphi \rangle_{H} + \int_{0}^{t} \langle \frac{c_{x}(t')}{|u_{x}(t')|}, \varphi_{x} \rangle_{H} dt'
\\
= - \int_{0}^{t} \langle c(t') w_{T}(t'), \varphi_{x} \rangle_{H} dt' + \sum_{l=1}^{\infty} \int_{0}^{t} \langle B (c(t')) g_{l} , \varphi \rangle_{H} d \beta_{l}(t')
\end{multline}
$\mathbb{P}$-a.s. for all $ t \in [0,T]$ and for all $\varphi \in V$.
\end{defi}

\subsection{Solutions and a priori estimates}
\label{sec:existenceSol}

We briefly discuss existence of solutions and a priori estimates. In order to deal with the presence of the length element $|u_{x}(t)|$
it is sensible to include it in the operator itself. To this end we introduce the variable
\begin{align}\label{eq-chc} \hat{c}(\omega,t,x):=c(\omega,t,x)|u_{x}(t,x)|, \qquad \quad \omega \in \Omega, \, t \in [0,T], \, x \in S^{1},
\end{align}
and rewrite \eqref{eq:def_solution} as
\begin{align} \label{eq:hatc}
\langle \hat{c}(t), \varphi \rangle_{H} - \langle \hat{c}_0, \varphi \rangle_{H}
& =-\int_{0}^{t} \langle \hat{A}(t', \hat{c}(t')), \varphi \rangle_{V',V} dt' + \sum_{l=1}^{\infty}\int_{0}^{t} \langle B (\frac{\hat{c}(t')}{|u_{x}(t')|}) g_{l} , \varphi \rangle_{H} d \beta_{l}(t')
\end{align}
where the linear operator $\hat{A}(t, \cdot) : V \to V'$, $t \in [0,T]$, is given by
\begin{align}\label{eqAtilda}
 \langle \hat{A}(t,\eta), \varphi \rangle_{V',V} :=
 \Big{\langle} \frac{\eta_{x}}{|u_{x}(t)|}, \frac{\varphi_{x}}{|u_{x}(t)|} \Big{\rangle}_{H}
 - \Big{\langle} \eta \big{(} \frac{u_{x}(t) \cdot u_{xx}(t)}{|u_{x}(t)|^3} - w_{T} \big{)}, \frac{\varphi_{x}}{|u_{x}(t)|} \Big{\rangle}_{H}, \quad \eta,\varphi \in V.
\end{align}
Observe that thanks to the regularity assumptions on $w$ and on the parametrisation around \eqref{regularity} the deterministic operator $\hat{A}(t,\cdot)$ is bounded,
\begin{align} \label{hatAbounded}
\| \hat{A}(t,\eta) \|_{V^{'}} \leq \hat{C} \| \eta \|_{V} \text{ for all } t \in [0,T], \eta \in V,
\end{align}
for some constant $\hat{C} > 0$ depending on the data only (including $u$). Moreover, it is coercive in the sense that there are constants $\hat{\alpha} > 0$ and $\hat{\beta} > 0$ such that
\begin{align} \label{hatAcoercive}
\langle \hat{A}(t,\eta), \eta \rangle_{V',V} \geq \hat{\alpha} \| \eta \|^{2}_{V} - \hat{\beta} \| \eta \|^{2}_{H}, \text{ for all } t \in [0,T], \eta \in V.
\end{align}
Note that from \eqref{hatAcoercive} we infer for any $t \in [0,T]$ and $\eta_{1},\eta_{2} \in V$ that
\begin{align}\label{weakmonAhat}
- \langle \hat{A}(t,\eta_{1}-\eta_{2}), (\eta_{1}-\eta_{2}) \rangle_{V',V} \leq \hat{\beta} \| \eta_{1}-\eta_{2}\|_{H}^{2}.
\end{align}
The assumptions of Theorem 4.2.4 in \cite{RoecknerBuch} are satisfied:
Noting that $\hat{A}$ has the opposite sign to $A$ in \cite{RoecknerBuch} and recalling the regularity of $u$ and, specifically, the length element $|u_{x}(t)|$ around \eqref{regularity}, (H1) is satisfied because $\hat{A}$ is linear in $\eta$, (H2) is satisfied thanks to \eqref{weakmonAhat} and \eqref{BH2}, (H3) is satisfied thanks to \eqref{hatAcoercive} and \eqref{BH3}, and (H4) is satisfied with $\alpha = 2$ thanks to \eqref{hatAbounded}. We conclude:

\begin{teo}\label{thm-exc}
There exists a unique solution in the sense of Definition~\ref{def_solution}. 
\end{teo}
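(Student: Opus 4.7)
The strategy is to apply the general variational SPDE existence and uniqueness theorem, namely Theorem 4.2.4 of \cite{RoecknerBuch}, to the reformulated equation \eqref{eq:hatc} for the auxiliary variable $\hat{c} = c|u_x|$, and then to recover the original unknown $c$ by pointwise division by $|u_x|$, which is well-defined and bounded thanks to \eqref{regularity}.

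The bulk of the argument is the verification of the four structural hypotheses (H1)--(H4) of that theorem on the Gelfand triple $V \subset H \subset V'$ for the drift operator $\hat{A}(t,\cdot)$ from \eqref{eqAtilda} together with the diffusion coefficient $\eta \mapsto B(\eta/|u_x(t)|)$. Hemicontinuity (H1) is automatic because $\hat{A}(t,\cdot)$ is linear and bounded. Weak monotonicity (H2) follows by combining \eqref{weakmonAhat} with \eqref{BH2} applied to the composition $\eta \mapsto B(\eta/|u_x(t)|)$, where the factor $1/|u_x(t)|$ is absorbed into the constant via the lower bound $|u_x| \geq d_0$. Coercivity (H3) is a direct consequence of \eqref{hatAcoercive} together with \eqref{BH3} (both applied to the composition) after renaming constants. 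Boundedness (H4) with exponent $\alpha=2$ is precisely \eqref{hatAbounded}. Since $|u_x(0)|$ is deterministic and uniformly bounded above and below, $\hat{c}_0 := c_0\,|u_x(0)|$ belongs to $L^2(\Omega;H)$ and is $\mathcal{F}_0$-measurable whenever $c_0$ is.

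Invoking Theorem 4.2.4 of \cite{RoecknerBuch} then yields a unique continuous $H$-valued $\mathcal{F}_t$-adapted process $\hat{c} \in L^{2}(\Omega; L^{2}((0,T);V)) \cap L^{2}(\Omega; C^{0}([0,T],H))$ satisfying \eqref{eq:hatc}. Defining $c := \hat{c}/|u_x|$ and using \eqref{regularity} once more, $c$ inherits the analogous regularity. A straightforward integration by parts in the spatial variable (used to transfer one derivative from $\hat{A}$ back onto the test function $\varphi$) then shows that $c$ satisfies the variational identity \eqref{eq:def_solution} of Definition~\ref{def_solution}. Uniqueness of $c$ is inherited from uniqueness of $\hat{c}$, since the map $c \mapsto c|u_x|$ is a measurable isomorphism between the relevant function spaces.

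The only real obstacle is the bookkeeping needed to verify that the composition $\eta \mapsto B(\eta/|u_x(t)|)$ inherits the growth and monotonicity bounds from $B$ with constants uniform in $t \in [0,T]$. This relies crucially on the uniform two-sided bound on $|u_x|$ provided by \eqref{regularity}; once it is in place, the proof reduces to a direct citation of the cited theorem, as essentially already signalled in the paragraph preceding the statement.
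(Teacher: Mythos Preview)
Your proposal is correct and follows essentially the same route as the paper: the paper's proof is contained in the paragraph immediately preceding the theorem, where (H1)--(H4) of \cite[Theorem~4.2.4]{RoecknerBuch} are verified for $\hat{A}$ and the composed diffusion $\eta \mapsto B(\eta/|u_x(t)|)$ using \eqref{hatAbounded}, \eqref{hatAcoercive}, \eqref{weakmonAhat}, \eqref{BH2}, and \eqref{BH3}, exactly as you do. One small remark: recovering \eqref{eq:def_solution} from \eqref{eq:hatc} does not actually require an integration by parts---substituting $\hat{c}=c|u_x|$ into $\hat{A}$ and applying the product rule makes the extra $\frac{u_x\cdot u_{xx}}{|u_x|^3}$ terms cancel directly---but this is a cosmetic point and does not affect the validity of your argument.
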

Uniqueness means that if two solutions $c_{1},c_{2}$ are found then
$\mathbb{E}(\|c_{1}(t) -c_{2}(t))\|^{2}_{H})=0$ holds for all $t \in [0,T]$ (see \cite[Proposition~4.2.10]{RoecknerBuch}).

\begin{rem}
 The smoothness of the length element $|u_{x}(t)|$ (see \eqref{eqAtilda}) is essential to ensure that the assumptions are satisfied. If the curve is approximated by something of lower regularity, such as a polygon in a finite element context, then we can no longer guarantee these properties and the idea in \eqref{eq-chc} around absorbing the length element in the solution no longer works. For the finite element analysis we therefore work directly with the formulation of Definition~\ref{def_solution}.
\end{rem}


A direct consequence of the well-posedness theorem is the following a-priori estimate for the expected value of the solution:
\begin{cor}\label{c-apriori-est}
Let $c$ be the solution according to Definition~\ref{def_solution}. Then
\begin{align*}
\sup_{ t \in [0,T]} \mathbb{E} (\| c(t)\|^{2}_{H}) + \int_{0}^{T} \mathbb{E}(\| c(t') \|^{2}_{V}) dt' \leq C.
\end{align*}
for some positive constant $C=C(\|c_{0}\|_{H},T,u).$
\end{cor}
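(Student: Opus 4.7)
The plan is to derive the estimate first for $\hat{c} = c|u_x|$ in the formulation \eqref{eq:hatc}, where the tools from the variational SPDE framework of \cite{RoecknerBuch} apply cleanly, and then transfer back to $c$ using the uniform bounds \eqref{regularity} on the length element.

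The key step is to apply the variational Itô formula (Theorem 4.2.5 in \cite{RoecknerBuch}) to the squared $H$-norm of $\hat c$. This yields
\begin{equation*}
\|\hat c(t)\|_H^2 = \|\hat c_0\|_H^2 - 2\int_0^t \langle \hat A(t',\hat c(t')),\hat c(t')\rangle_{V',V}\,dt' + \int_0^t \Big\|B\Big(\tfrac{\hat c(t')}{|u_x(t')|}\Big)\Big\|_{L_2(U,H)}^2 dt' + M(t),
\end{equation*}
where $M(t)$ is a continuous local martingale. Taking expectations (after a standard localization argument to justify that the martingale term vanishes), and invoking the coercivity \eqref{hatAcoercive} on the operator term together with the growth bound \eqref{BH3} on $B$, one obtains
\begin{equation*}
\mathbb{E}\|\hat c(t)\|_H^2 + 2\hat\alpha\int_0^t \mathbb{E}\|\hat c(t')\|_V^2\, dt' \leq \mathbb{E}\|\hat c_0\|_H^2 + 2\hat\beta \int_0^t \mathbb{E}\|\hat c(t')\|_H^2\, dt' + C_B \int_0^t \Big(1 + \mathbb{E}\Big\|\tfrac{\hat c(t')}{|u_x(t')|}\Big\|_H^2\Big)dt'.
\end{equation*}
Using the bound $|u_x|\geq d_0$ from \eqref{regularity} to control the last term by a constant times $\mathbb{E}\|\hat c(t')\|_H^2$, Grönwall's lemma then bounds $\mathbb{E}\|\hat c(t)\|_H^2$ uniformly on $[0,T]$, and re-inserting this into the inequality controls the integral of $\mathbb{E}\|\hat c\|_V^2$.

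To pass from $\hat c$ to $c$, I would use that $c = \hat c/|u_x|$ with $|u_x|$ smooth and bounded away from zero and infinity by \eqref{regularity}, so that $\|c(t)\|_H \leq d_0^{-1}\|\hat c(t)\|_H$ and similarly $\|c\|_V \leq C(u)\|\hat c\|_V$ (via $c_x = \hat c_x/|u_x| - \hat c\,\partial_x|u_x|/|u_x|^2$ and the smoothness of $u$). In the same way $\|\hat c_0\|_H \leq d_0^{-1}\|c_0\|_H$, so the final constant depends only on $\|c_0\|_H$, $T$, and $u$ as claimed.

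The main technical obstacle is the rigorous application of Itô's formula in this variational setting: the process $\hat c$ is only known to lie in $L^2(\Omega; L^2(0,T;V))\cap L^2(\Omega;C([0,T];H))$, and one must first verify the hypotheses of \cite[Theorem 4.2.5]{RoecknerBuch}, namely the integrability of $\hat A(\cdot,\hat c)$ in $V'$ and of $B(c)$ in $L_2(U,H)$, which follow from \eqref{hatAbounded} and \eqref{BH4} together with the regularity class of $\hat c$. Once Itô is justified, the rest is essentially Grönwall combined with the elementary change of variable between $c$ and $\hat c$.
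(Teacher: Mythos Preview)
Your proposal is correct and follows essentially the same route as the paper: apply the variational It\^{o} formula from \cite[Theorem~4.2.5]{RoecknerBuch} to $\|\hat c\|_H^2$, use the coercivity \eqref{hatAcoercive} and the growth bound \eqref{BH3}, take expectations so the martingale term vanishes, and conclude by Gronwall together with the equivalence between $c$ and $\hat c$ via \eqref{regularity}. The paper's proof is slightly terser (it does not spell out the localization or the $\hat c \mapsto c$ transfer), but the argument is the same.
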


\begin{proof}
Applying the It\^{o} formula 
 from \cite[Theorem 4.2.5]{RoecknerBuch} to \eqref{eq:hatc} yields that
\begin{multline*}
 \| \hat{c}(t) \|_{H}^2 + 2 \int_0^t \langle \hat{A}(t',\hat{c}(t')),\hat{c}(t') \rangle_{V',V} dt' \\
 = \| c_0 \|_{H}^2 + \int_0^t \Big{\|} B \big{(} \frac{\hat{c}(t')}{|u_{x}(t')|} \big{)} \Big{\|}_{L_2(U,H)}^2 dt' + \sum_{l=1}^{\infty} \int_0^t 2 \langle B ( \frac{\hat{c}(t')}{|u_{x}(t')|} ) g_l, \hat{c}(t') \rangle_{H} d \beta_{l}(t').
\end{multline*}
We now use \eqref{hatAcoercive}, \eqref{BH3} and the fact that ``Brownian motions average zero'' (see \cite[Remark~4.2.8]{RoecknerBuch} for details) to conclude that
\[
 \mathbb{E} \big{(} \| \hat{c}(t) \|_{H}^2 \big{)} + \int_0^t 2 \hat{\alpha} \mathbb{E} \big{(} \| \hat{c}(t') \|_{V}^2 \big{)} dt' \leq \mathbb{E} \big{(} \| c_0 \|_{H}^2 \big{)} + C \int_0^t 1 + \mathbb{E} \big{(} \| \hat{c}(t') \|_{H}^2 \big{)} dt'.
\]
The corollary follows from a Gronwall argument and using the regularity of $u$ around \eqref{regularity}.
\end{proof}

\subsection{Regularity assumptions}

To perform an error analysis for a discrete approximation, stronger (smoothness) assumptions on the noise term and on the solution are required.

Regularity results (which are outside the scope of this paper) typically require some conditions on the stochastic and deterministic forcing terms, the regularity of the initial data, the regularity of the coefficients, and some compatibility conditions. Regularity theory is, as well known in the (S)PDE community, a very delicate matter. With regards to the question of space regularity we refer for example to \cite{Zhang07}, \cite{Hof13} and references given in there. Concerning time regularity we mention (in the context of mild solutions) \cite{KruseLarsson12}, \cite{Kruse14} and refer also to the references given in there. For our error analysis of a finite element approximation we make the following assumption:
\begin{ass}\label{lass-cholder}
There exists $\nu_{r} \in [0,1)$ such that
 \begin{align}\label{ass-cholder}
 \sup_{t, \tau \in [0,T], t \neq \tau} \frac{(\mathbb{E}(\| c(t) - c(\tau) \|_{V}^{2}))^{\frac{1}{2}}}{|t -\tau|^{\frac{\nu_{r}}{2} }} < \infty
 \end{align}
\end{ass}

\begin{rem}\label{remNU1}
A couple of remarks on the above regularity assumption on the solution:
\begin{itemize}
 \item
 Our assumption is strongly motivated by \cite[Theorem~2.2]{BDE}, where it is shown that in a similar variational setting \eqref{ass-cholder} is satisfied by the stochastic heat equation if the initial data are sufficiently smooth and the stochastic operator $B$ has some additional properties.

 \item
 If $u$ is such that $|u_{x}|=1$ (for instance, a parametrisation of the stationary unit circle) then $c$ is the solution to the stochastic heat equation with periodic boundary conditions.
 For that equation, several regularity results are known, for instance, \cite[\S~10.4]{Powell} in the context of mild solutions with Dirichlet boundary conditions. Such results usually are subject to additional regularity assumptions on the forcing term (see \cite[Assumption~10.23]{Powell}).

 \item
 If $c$ admits a classical $L^2$ time derivative with values in some Sobolev space $W^{s,2}$ then we can write (without loss of generality let $t > \tau$)
 \begin{align*}
 \| c(t) - c(\tau) \|_{W^{s,2}(S^{1})} &=\|\int_{\tau}^{t} c_{t}(t') dt' \|_{W^{s,2}(S^{1})}\leq
 \int_{\tau}^{t}\| c_{t}(t') \|_{W^{s,2}(S^{1})} dt'\\
 &\leq \sqrt{|t -\tau|} (\int_{\tau}^{t}\| c_{t}(t') \|_{W^{s,2}(S^{1})}^{2} dt')^{\frac{1}{2}}
 \end{align*}
 and therefore \eqref{ass-cholder} is quickly verified.
 
\end{itemize}
 \end{rem}

We also need to make some assumptions on the approximation of the stochastic perturbation.
Since $B$ acts on a process $W(t)$ in an infinite dimensional space, it is natural to consider projections to finite dimensional subspaces for computations. Thus, denote by $\proj_{L}: U \to {\rm span} \langle g_{1}, \ldots, g_{L} \rangle_{U}$ the orthogonal projection given by
\begin{align}\label{P-tilde}
 \proj_{L} u = \sum_{l=1}^{L} \langle u, g_{l} \rangle_{U} g_{l} \qquad \qquad u \in U.
\end{align}

The idea is that if $L$ is large then the error we make in considering $B(c) \proj_{L}$ instead of $B(c)$ is small. This is indeed true pointwise by definition of any projection operator. For the analysis that follows however we assume that this is uniformly the case:

\begin{ass}\label{approximW}
Given any $\epsilon_{W}>0$ we can choose $L$ so large that
\begin{align*} 
 \mathbb{E}(\|B(y) (Id - \proj_{L})\|^{2}_{L_{2}(U,H)}) \leq \epsilon_{W} (1+ \mathbb{E}(\| y \|^{2}_{H})) \quad \forall y \in L^{2}(\Omega,V).
\end{align*}
\end{ass}

Analogous assumptions can be found in \cite[Assumption~10.32]{Powell} and \cite[(H4) in \S4.3]{BDE}, which are motivated by \cite[Lemma 10.33]{Powell} and \cite[remark 4.3]{BDE} respectively.

\section{Numerical Approximation and Convergence Analysis}

\subsection{Discretization}

We now discretize the equation both in space and time, using piecewise linear finite element for the space variable and a Euler-Maruyama scheme for the time variable.

For the spatial discretization let $S^1= \bigcup_{j=1}^N S_j$ be a decomposition of $S^1$ into segments given by nodes $x_j$. We think of $S_j$ as the interval $[x_{j-1}, x_{j}] \subset [0,2\pi]$ for $j=1, \ldots, N$. In the following, indices related to the grid have to be considered modulo $N$. For instance, we identify $x_0=x_N$. Let $h_j=|S_j|$ and $h=\max_{j=1,\ldots, N}h_j$ be the maximal diameter of a grid element. We assume that for some constant $\bar{C}>0$ we have
\begin{align}
\label{(4.1)}
h_j \geq \bar{C}h \quad \forall j.
\end{align}
Clearly, the above inequality yields that $\bar{C}h_{j+1} \leq h_j \leq \frac{h_{j+1}}{\bar{C}}$ for all $j$.
For a discretization of the (given) geometric evolution we introduce the discrete finite dimensional spaces
\begin{equation*}
S_h := \{ v \in C^0(S^1, \R) \, : \, v|_{S_j} \in P_1 (S_j),\, j=1 \cdots, N\} \subset V \subset H, \qquad X_h = S_h^2,
\end{equation*}
of continuous periodic piecewise affine functions on the grid. The scalar nodal basis functions of $S_h$ are denoted by $\phi_j$, $j=1, \ldots, N$, and characterized by $\phi_j(x_i)=\delta_{ij}$.

For a continuous function $v \in C^0 (S^1, \R)$ let $I_h v \in S_h$ be its linear interpolation that is uniquely defined by $I_h v(x_i)= v(x_i)$ for all $i=1, \ldots,N$. For convenience we also denote the interpolation onto $X_h$ by $I_h$. We note the following standard interpolation estimates (both for scalar and vector valued functions):
\begin{align}
\label{(4.2)}
\| v-I_h v \|_{L^2(S^1)} &\leq C h^k \|v \|_{W^{k,2}(S^1)} &\text{ for $k=1,2$}\,,\\
\label{(4.2)bis}
\| (v-I_h v)_x \|_{L^2 (S^1)} & \leq C h \| v \|_{W^{2,2} (S^1)}\,,\\
\label{(4.2)extra}
 \| (I_{h} v)_{x} \|_{L^{2} (S^{1})} & \leq C \| v_{x} \|_{ L^{2}(S^{1})}\, . 
\end{align} 
Recall also the following inverse estimates for any $v_{h} \in S_{h}$ and $j=1, \ldots, N$:
\begin{align}
\| v_{hx} \|_{L^{2}(S_{j})} & \leq \frac{C}{h_{j}} \| v_{h} \|_{L^{2} (S_{j})} & \quad \overset{\eqref{(4.1)}}{\Longrightarrow} \quad \|v_{hx} \|_{L^{2}(S^1)} & \leq \frac{C}{h} \| v_{h} \|_{L^{2} (S^1)}, \label{IE-1} \\
\| v_{h} \|_{L^{\infty} (S_{j})} & \leq \frac{C}{\sqrt{h_{j}}} \| v_{h} \|_{L^{2}(S_{j}) } & \quad \overset{\eqref{(4.1)}}{\Longrightarrow} \quad \| v_{h} \|_{L^{\infty} (S^1)} & \leq \frac{C}{\sqrt{h}} \| v_{h} \|_{L^{2}(S^1)} \label{IE-2}.
\end{align}
Despite the fact that all norms are equivalent in $S_{h}$, to indicate the scalar product that we are considering on the finite dimensional spaces we introduce the notation
\begin{align*}
V_{h}:= (S_{h}, \langle, \rangle_{V}), \qquad \qquad H_{h}:= (S_{h}, \langle, \rangle_{H}).
\end{align*}

Next, let $0=t_{0} < t_{1} < \ldots < t_{M}=T$ be an equi-partition of the time interval $[0,T]$ and set $\triangle t= t_{k}-t_{k-1}$ for $k=1, \ldots, M$.
For the given map $u=u(t,x)$ we write
$$ u^{k}(x)= u(t_{k},x), \qquad x \in S^{1},$$
and set
$$ u_{h}^{k}(x):= I_{h} u(t_{k},x), \qquad w^{k}_{Th}(x):= I_{h} w_{T}(t_{k},x) \qquad x \in S^{1}. $$
Note that by the regularity of $u$, \eqref{regularity}, and using \eqref{IE-2} we can infer that for $h \leq h_{0}$ sufficiently small (for instance, see \cite{PozziStinner2017} for the details)
\begin{align}\label{regularityh}
\frac{d_{0}}{2} \leq |u_{hx}^{k}(x)| \leq \frac{2}{d_{0}} \quad \text{ for } x \in S^{1} \text{ and for any } k.
\end{align} 
Moreover, by the smoothness properties of $w_{T}$ we have that
\begin{align}\label{bound-wth}
\| w^{k}_{Th} \|_{L^{\infty}(S^{1})} \leq C\| w_{T}(t_{k}, \cdot)\|_{L^{\infty}(S^{1})} \leq C \qquad \text{ for any }k.
\end{align}

We seek for an approximation of $c(\omega, t_{k},x)$ of the form
$$c^{k}_{h}(\omega,x)= \sum_{j=1}^{N} c_{j}^{k}(\omega) \phi_{j}(x), \qquad c^{k}_{h}(\omega,\cdot )\in V_{h} \quad \forall \omega \in \Omega$$
with coefficients maps $c_{j}^{k}: \Omega \to \R$ and propose the following fully discrete scheme:

\begin{algo}\label{fullydiscrete-scheme-g}
Given an $\mathcal{F}(t_{0})$-measurable initial map $c^{0}_{h} \in L^{2}(\Omega, S_{h})$, iteratively find maps $c^{k}_{h} \in L^{2}(\Omega, S_{h})$, $k=1,2, \ldots, M$ such that
\begin{align}\label{eq-3.6-g}
\langle c^{k}_{h} |u^{k}_{hx}|, \varphi_{h} \rangle_{H} + \triangle t \langle \frac{c^{k}_{hx}}{| u^{k}_{hx}|}, \varphi_{hx} \rangle_{H}
&= \langle c^{k-1}_{h} | u^{k-1}_{hx}|, \varphi_{h} \rangle_{H}
-\triangle t \langle c^{k-1}_{h} w_{Th}^{k}, \varphi_{hx} \rangle_{H} \notag\\
& \quad + \langle \Bapprox(c^{k-1}_{h})\triangle W_{k}, \varphi_{h}\rangle_{H}
\qquad \forall \varphi_{h} \in V_{h},
\end{align}
where (upon recalling~\eqref{P-tilde})
$\triangle W_{k}= W(t_{k})-W(t_{k-1}) \qquad \text{ and } \qquad \Bapprox(c):= B(c)\proj_{L}.$
\end{algo}

Note that
\begin{align*}
\langle \Bapprox(c^{k-1}_{h})\triangle W_{k}, \varphi_{h}\rangle_{H}
&= \sum_{l=1}^{L} \langle B(c^{k-1}_{h})(g_{l}), \varphi_{h} \rangle_{H} \triangle \beta_{l,k}
=\sum_{l=1}^{L} \langle B(c^{k-1}_{h})(g_{l}), \varphi_{h} \rangle_{H} (\beta_{l}(t_{k})-\beta_{l}(t_{k-1}))\\
&= \langle \int_{t_{k-1}}^{t_{k}} \Bapprox(c^{k-1}_{h}) dW(t'), \varphi_{h} \rangle_{H}
= \langle \int_{t_{k-1}}^{t_{k}} B(c^{k-1}_{h}) \proj_{L} \,dW(t'), \varphi_{h} \rangle_{H}.
\end{align*}

Writing $c_h^k(\omega,x) = \sum_{j=1}^{N} c^{k}_{j}(\omega) \phi_j(x)$, $(\omega,x) \in \Omega \times S^1$, the above equation \eqref{eq-3.6-g} is equivalent to a linear system
\begin{align}
(\bbb{M}^{k} + \triangle t\, \bbb{S}^{k})\bbb{c}^{k}(\omega)= \bbb{f}^{k-1}(\omega). \label{eq:linsys}
\end{align}
Here, $\bbb{c}^{k}(\omega) = (c^{k}_{1}(\omega), \ldots, c^{k}_{N}(\omega)) \in \R^N$, the symmetric matrices $\bbb{M}^{k}, \bbb{S}^{k} \in \R^{N \times N}$ have entries
$$ M_{ij}^{k} = \int_{S^{1}} \phi_{i}(x) \phi_{j}(x) |\partial_{x}(I_{h} u (t_{k}, x))| dx, \qquad
S_{ij}^{k} = \int_{S^{1}} \partial_{x}\phi_{i}(x) \partial_{x}\phi_{j}(x) \frac{1}{| \partial_{x}(I_{h} u (t_{k}, x))|} dx, $$
and the right-hand side $\bbb{f}^{k-1}(\omega) = (f_{1}^{k-1}(\omega), \ldots, f_{N}^{k-1}(\omega)) \in \R^N$ has entries
$$ f_{i}^{k-1}(\omega) = \langle c^{k-1}_{h} | u^{k-1}_{hx}|, \phi_{i} \rangle_{H}
-\triangle t \langle c^{k-1}_{h} w_{Th}^{k}, \partial_{x} \phi_{i} \rangle_{H}
+ \langle \Bapprox(c^{k-1}_{h}) \triangle W_{k}, \phi_{i} \rangle_{H}.$$

We note that the matrix $\bbb{M}^{k} + \triangle t \bbb{S}^{k}$ is positive definite so that
\eqref{eq:linsys} has a unique solution for any given right-hand side $\bbb{f}^{k-1}(\omega)$.
Moreover, recalling the assumptions on the Wiener process around \eqref{repW} 
it follows that the coefficients
$c^{k}_{j}: \Omega \to \R$, $j=1, \ldots, N$, and therefore also $c^{k}_{h} : \Omega \to S_{h}$ are $\mathcal{F}(t_{k})$-measurable maps.

\subsection{Discrete a priori estimate}

In a first step towards an error analysis we derive natural a-priori estimates for the solution to the discrete problem.

\begin{lemma}[Discrete a-priori estimates]\label{dis-apriori-g}
Let $c_{h}^{k}$, $k=0,1, \ldots$ be computed according to Algorithm~\ref{fullydiscrete-scheme-g}.
Moreover let $\triangle t \leq \triangle t_{0} $ be sufficiently small.
Then we have that
\begin{multline*}
\mathbb{E}\Big{(}\| c_{h}^{m} \sqrt{|u^{m}_{hx}|}\|^{2}_{H}\Big{)}
+ \sum_{k=1}^{m} \mathbb{E} \Big{(}\| (c_{h}^{k}-c_{h}^{k-1}) \sqrt{|u^{k-1}_{hx}|}\|^{2}_{H}\Big{)}
+ \triangle t \sum_{k=1}^{m} \mathbb{E} \left ( \left \| \frac{ c^{k}_{hx}}{\sqrt{|u^{k}_{hx}|}} \right \|^{2}_{H} \right ) \\
\leq C \Big{(}1 + \mathbb{E}\Big{(}\| c_{h}^{0}\sqrt{ |u^{0}_{hx}|}\|^{2}_{H}\Big{)} \Big{)}
\end{multline*}
holds for any $m =1, \ldots, M$.
\end{lemma}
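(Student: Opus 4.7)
The plan is to test the scheme \eqref{eq-3.6-g} with $\varphi_h = c_h^k$, which is admissible since $c_h^k \in V_h$, and to derive an energy inequality that is then summed in $k$ and combined with a discrete Gronwall lemma.

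The first (and main) step is to manipulate the mass term in a telescoping manner despite the fact that the weight $|u_{hx}^k|$ changes between time levels. I would split
\begin{align*}
\langle c_h^k|u_{hx}^k|,c_h^k\rangle_H -\langle c_h^{k-1}|u_{hx}^{k-1}|,c_h^k\rangle_H
&= \langle (c_h^k-c_h^{k-1})\sqrt{|u_{hx}^{k-1}|},c_h^k\sqrt{|u_{hx}^{k-1}|}\rangle_H \\
&\quad +\langle (c_h^k)^2,|u_{hx}^k|-|u_{hx}^{k-1}|\rangle_H,
\end{align*}
then apply the identity $a(a-b)=\tfrac{1}{2}a^2+\tfrac{1}{2}(a-b)^2-\tfrac{1}{2}b^2$ to the first scalar product and re-weight $\tfrac{1}{2}\|c_h^k\sqrt{|u_{hx}^{k-1}|}\|_H^2$ back to $\tfrac{1}{2}\|c_h^k\sqrt{|u_{hx}^{k}|}\|_H^2$ at the cost of a remainder of the form $\tfrac{1}{2}\langle (c_h^k)^2,|u_{hx}^k|-|u_{hx}^{k-1}|\rangle_H$. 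The smoothness of $u$ (and hence of $I_hu(t,\cdot)$) yields $\big||u_{hx}^k|-|u_{hx}^{k-1}|\big|\leq C\Delta t$ pointwise, so this remainder is bounded by $C\Delta t\,\|c_h^k\|_H^2$, which will be absorbed later.

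Next I would treat the remaining deterministic terms: the diffusion contribution tested against $c_h^k$ is exactly $\Delta t\,\|c_{hx}^k/\sqrt{|u_{hx}^k|}\|_H^2$, and the advection term $-\Delta t\langle c_h^{k-1}w_{Th}^k,c_{hx}^k\rangle_H$ is handled by Young's inequality together with \eqref{bound-wth} and \eqref{regularityh}, absorbing a small multiple of the diffusion energy and leaving $C\Delta t\,\|c_h^{k-1}\|_H^2$ on the right-hand side. For the stochastic term, the key observation is that $c_h^{k-1}$ is $\mathcal{F}_{t_{k-1}}$-measurable while $\Delta W_k$ is independent of $\mathcal{F}_{t_{k-1}}$, whereas $c_h^k$ is not. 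I would therefore decompose
\[
\langle \Bapprox(c_h^{k-1})\Delta W_k,c_h^k\rangle_H
= \langle \Bapprox(c_h^{k-1})\Delta W_k,c_h^{k-1}\rangle_H
+ \langle \Bapprox(c_h^{k-1})\Delta W_k,c_h^k-c_h^{k-1}\rangle_H.
\]
Upon taking expectation, the first term vanishes by the martingale property. The second is controlled by Young's inequality against $\tfrac{1}{4}\|(c_h^k-c_h^{k-1})\sqrt{|u_{hx}^{k-1}|}\|_H^2$ (absorbed on the left), plus $C\|\Bapprox(c_h^{k-1})\Delta W_k\|_H^2$. The Itô isometry on the single increment together with $\|\Bapprox\|_{L_2(U,H)}\leq\|B\|_{L_2(U,H)}$ and \eqref{BH3} then gives $\mathbb{E}\|\Bapprox(c_h^{k-1})\Delta W_k\|_H^2\leq C_B\Delta t(1+\mathbb{E}\|c_h^{k-1}\|_H^2)$.

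Collecting everything, taking expectation, summing from $k=1$ to $m$, and using \eqref{regularityh} to pass between $\|c_h^k\|_H^2$ and $\|c_h^k\sqrt{|u_{hx}^k|}\|_H^2$, I arrive at an inequality of the form
\[
a_m+\sum_{k=1}^m b_k+\sum_{k=1}^m d_k\leq a_0+CT+C\Delta t\sum_{k=0}^m a_k,
\]
where $a_k=\mathbb{E}\|c_h^k\sqrt{|u_{hx}^k|}\|_H^2$, $b_k$ and $d_k$ are the two sums on the left of the claim. For $\Delta t\leq \Delta t_0$ small enough the $a_m$ term on the right is absorbed into the left, and the standard discrete Gronwall inequality yields the asserted bound. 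The main obstacle is the bookkeeping in the opening step, namely cleanly separating the time-dependence of the weight $|u_{hx}|$ from the genuine energy $\|c_h^k\sqrt{|u_{hx}^k|}\|_H^2$ so that the increment term $\|(c_h^k-c_h^{k-1})\sqrt{|u_{hx}^{k-1}|}\|_H^2$ appears with the precise weight stated in the lemma and is large enough to absorb the stochastic Young contribution.
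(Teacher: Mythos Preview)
Your proposal is correct and follows essentially the same route as the paper's proof. The only cosmetic difference is in the handling of the mass term: the paper encodes your two-step manipulation (split off the weight change, apply $a(a-b)=\tfrac12 a^2+\tfrac12(a-b)^2-\tfrac12 b^2$, then re-weight) into the single algebraic identity
\[
(c^{k}a^{k}-c^{k-1}a^{k-1})c^{k}=\tfrac12\big[(c^{k})^{2}a^{k}-(c^{k-1})^{2}a^{k-1}\big]+\tfrac12(c^{k}-c^{k-1})^{2}a^{k-1}+\tfrac12(c^{k})^{2}(a^{k}-a^{k-1}),
\]
but the resulting energy identity, the treatment of the advection and stochastic terms (splitting $c_h^k=c_h^{k-1}+(c_h^k-c_h^{k-1})$, martingale property, It\^o isometry, \eqref{BH3}), and the final Gronwall argument are identical.
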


\begin{proof}
Fixing $\omega \in \Omega$ in \eqref{eq-3.6-g} and taking $\varphi_{h}= c_{h}^{k}(\omega)$, we obtain using
the elementary equality (for $c^{j}$, $a^{j} \in \R$)
\begin{align}\label{algebra}
(c^{k}a^{k} - c^{k-1}a^{k-1})c^{k}= \frac{1}{2} [(c^{k})^{2}a^{k} -(c^{k-1})^{2}a^{k-1}] 
+ \frac{1}{2} (c^{k}-c^{k-1})^{2}a^{k-1} + \frac{1}{2} (c^{k})^{2} (a^{k}-a^{k-1})
\end{align}
that
\begin{align}\label{InMezzo}
\frac{1}{2 } &\| c^{k}_{h}\sqrt{|u^{k}_{hx}|} \|^{2}_{H}- \frac{1}{2 } \| c_{h}^{k-1}\sqrt{|u^{k-1}_{hx}|}\|^{2}_{H} +\frac{1}{2 }\|(c_{h}^{k} -c_{h}^{k-1})\sqrt{|u^{k-1}_{hx}|}\|^{2}_{H}
+\triangle t \left \| \frac{c_{hx}^{k}}{\sqrt{|u^{k}_{hx}| } } \right \|^{2}_{H} \notag \\
&= -\frac{1}{2} \int_{S^{1}}|c_{h}^{k}|^{2} (|u^{k}_{hx}|-|u^{k-1}_{hx}|) dx
-\triangle t \langle c^{k-1}_{h}, c^{k}_{hx} w_{Th}^{k} \rangle_{H} \notag \\
& \quad 
+ \langle \Bapprox(c^{k-1}_{h})\triangle W_{k},c^{k}_{h} -c^{k-1}_{h} \rangle_{H}
+ \langle \Bapprox(c^{k-1}_{h})\triangle W_{k}, c^{k-1}_{h}\rangle_{H} \notag \\
& =I+II+III+IV.
\end{align}

Using that for a $C^{1}$ map $v$ there holds $\|(I_{h}v )_{x}\|_{L^{\infty}} \leq \| v_{x}\|_{L^{\infty}}$, and using the linearity of the interpolation operator, we see that
\begin{align}\label{err-uhx}
||u^{k}_{hx}|-|u^{k-1}_{hx}|| &\leq |u^{k}_{hx} - u^{k-1}_{hx} | \notag \\
&=|(I_{h}[ u(t_{k}, \cdot) - u(t_{k-1}, \cdot)] )_{x}| \leq C\| ( u(t_{k}, \cdot) - u(t_{k-1}, \cdot))_{x} \|_{L^{\infty}} \notag \\
&=C\| u_{x}(t_{k}, \cdot) - u_{x}(t_{k-1}, \cdot) \|_{L^{\infty}} \leq C \triangle t \qquad \text{ with } C =C(\sup_{S^{1} \times [0,T]} |u_{xt}|).
\end{align}
Using \eqref{regularityh} we infer that
\begin{align*}
I =-\frac{1}{2} \int_{S^{1}}|c_{h}^{k}|^{2} (|u^{k}_{hx}|-|u^{k-1}_{hx}|) dx
 \leq C ( \triangle t)\| c^{k}_{h}\sqrt{|u^{k}_{hx}|} \|^{2}_{H}.
\end{align*}
Moreover, note that using \eqref{regularityh}, \eqref{bound-wth}, and a Young inequality we can write
\begin{align*}
II=-\triangle t \langle c^{k-1}_{h}, c^{k}_{hx} w_{Th}^{k} \rangle_{H} \leq
\epsilon \triangle t \left \| \frac{c_{hx}^{k}}{\sqrt{|u^{k}_{hx}| } } \right \|^{2}_{H}
+ C_{\epsilon} \triangle t \| c_{h}^{k-1}\sqrt{|u^{k-1}_{hx}|}\|^{2}_{H}.
\end{align*}

Next, we write
\begin{align*}
III= \langle \Bapprox(c^{k-1}_{h})\triangle W_{k},c^{k}_{h} -c^{k-1}_{h} \rangle_{H}=
& \leq C_{\delta} \| \Bapprox(c^{k-1}_{h})\triangle W_{k}\|^{2}_{H} +\delta \| (c^{k}_{h} - c^{k-1}_{h}) \sqrt{|u^{k-1}_{hx}|}\|_{H}^{2}
\end{align*}
where we have used a Young inequality and \eqref{regularityh}.

Observe that, using It\^o-isometry
\cite[Rem. B.0.6 (iii)]{RoecknerBuch} and \eqref{BH3} we obtain that
\begin{multline*} \mathbb{E} ( \| \Bapprox(c^{k-1}_{h})\triangle W_{k}\|^{2}_{H}) = \mathbb{E} ( \| \int_{t_{k-1}}^{t_{k}} \Bapprox(c^{k-1}_{h}) dW(t')\|^{2}_{H} ) = \int_{t_{k-1}}^{t_{k}} \mathbb{E} (\| B(c^{k-1}_{h}) \proj_{L} \|^{2}_{L_{2}(U,H)}) dt \\
\leq \triangle t \, \mathbb{E} (\| B(c^{k-1}_{h}) \|^{2}_{L_{2}(U,H)}) \leq C(\triangle t) (1 + \mathbb{E}(\|c^{k-1}_{h}\|^{2}_{H} )).
\end{multline*}
Using the stochastic independence
of $\Bapprox(c^{k-1}_{h})$ and $\triangle W_{k}$ we then note that
\begin{align*}
\mathbb{E}(IV)=\mathbb{E} (\langle \Bapprox(c^{k-1}_{h})\triangle W_{k}, c^{k-1}_{h} \rangle_{H})=
\mathbb{E} (\langle \int_{t_{k-1}}^{t_{k}} \Bapprox(c^{k-1}_{h}) dW(t'), c^{k-1}_{h} \rangle_{H} )=
0.
\end{align*}
Therefore choosing $\delta=1/4$, $\epsilon=1/2$, summing up over $k$ between one and $m$, and integrating over $\Omega$ we obtain from \eqref{InMezzo} and \eqref{regularityh} that
\begin{align*}
\frac{1}{2 } \mathbb{E} (\| c^{m}_{h}\sqrt{|u^{}_{hx}|} \|^{2}_{H})- \frac{1}{2 } \mathbb{E}( \| c_{h}^{0}\sqrt{|u^{0}_{hx}|}\|^{2}_{H} ) \, &+ \sum_{k=1}^{m}\frac{1}{4 } \mathbb{E}(\|(c_{h}^{k} -c_{h}^{k-1})\sqrt{|u^{k-1}_{hx}|}\|^{2}_{H})\\
+\sum_{k=1}^{m} \frac{1}{2}\triangle t \mathbb{E} \left(\left \| \frac{c_{hx}^{k}}{\sqrt{|u^{k}_{hx}| } } \right \|^{2}_{H} \right)
&\leq \sum_{k=1}^{m} C(\triangle t) (1 + \mathbb{E}(\|c^{k-1}_{h} \sqrt{|u^{k-1}_{hx}|}\|^{2}_{H} )) \\
& \quad + \sum_{k=1}^{m} C ( \triangle t) \mathbb{E}(\| c^{k}_{h}\sqrt{|u^{k}_{hx}|} \|^{2}_{H}).
\end{align*}
For $ \triangle t $ sufficiently small we can absorb one term on the right-hand side to obtain (neglecting the positive sums and using $m \triangle t \leq T$)
\begin{align*}
\mathbb{E} (\| c^{m}_{h}\sqrt{|u^{}_{hx}|} \|^{2}_{H}) \leq C \mathbb{E}( \| c_{h}^{0}\sqrt{|u^{0}_{hx}|}\|^{2}_{H} ) +CT + \sum_{k=1}^{m-1} C(\triangle t ) \mathbb{E}(\|c^{k}_{h} \sqrt{|u^{k}_{hx}|}\|^{2}_{H} ).
\end{align*}
Application of the discrete Gronwall Lemma \ref{discreteGronwall} together with $m \triangle t \leq M \triangle t= T$ yields the claim.
\end{proof}

\begin{lemma}[Discrete Gronwall]\label{discreteGronwall}
Assume that numbers $\Phi_i$ satisfy
$ 
0 \leq \Phi_i \leq \sum_{j=1}^{i-1} A_j \Phi_j+C
$ 
for $i=1, \ldots, s$, where $A_j,C \geq 0$. Then $\Phi_i \leq C \exp ( \sum_{j=1}^{i-1} A_j ) $, $(i=1,\ldots,s)$.
\end{lemma}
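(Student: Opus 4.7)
The plan is a straightforward induction on $i$, since the recursive inequality bounds $\Phi_i$ in terms only of $\Phi_1,\ldots,\Phi_{i-1}$. Set $S_j := \sum_{k=1}^{j} A_k$ (with $S_0 = 0$), so that the target bound reads $\Phi_i \leq C e^{S_{i-1}}$.

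The base case $i=1$ is immediate: the sum in the hypothesis is empty, hence $\Phi_1 \leq C = C e^{S_0}$. For the inductive step, assume the claim holds for all indices less than $i$. Plugging the inductive bound into the hypothesis yields
\begin{equation*}
\Phi_i \;\leq\; \sum_{j=1}^{i-1} A_j \, C e^{S_{j-1}} + C.
\end{equation*}

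The key elementary inequality is $1+x \leq e^x$ for $x \geq 0$, which rearranges to $A_j \leq e^{A_j} - 1$. Multiplying by $e^{S_{j-1}} \geq 0$ gives $A_j e^{S_{j-1}} \leq e^{S_j} - e^{S_{j-1}}$, and summing telescopes:
\begin{equation*}
\sum_{j=1}^{i-1} A_j e^{S_{j-1}} \;\leq\; \sum_{j=1}^{i-1}\bigl(e^{S_j} - e^{S_{j-1}}\bigr) \;=\; e^{S_{i-1}} - 1.
\end{equation*}
Combining, $\Phi_i \leq C(e^{S_{i-1}} - 1) + C = C e^{S_{i-1}}$, closing the induction.

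There is no real obstacle here: the only subtlety is choosing the convex inequality $A_j \leq e^{A_j}-1$ (rather than the weaker $A_j \leq e^{A_j}$) so that the sum telescopes cleanly and absorbs the additive constant $C$. The non-negativity assumptions $A_j,C,\Phi_j \geq 0$ are used to preserve the direction of the inequalities when substituting and multiplying.
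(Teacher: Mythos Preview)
Your proof is correct. The paper itself does not supply a proof of this lemma; it is stated as a standard auxiliary result and used directly in the arguments for Lemma~\ref{dis-apriori-g} and Theorem~\ref{err-est-g}. Your induction with the telescoping bound $A_j e^{S_{j-1}} \leq e^{S_j}-e^{S_{j-1}}$ is the standard clean way to obtain the sharp constant $C\exp(S_{i-1})$, and nothing is missing.
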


\subsection{Ritz projection}

For the error analysis a suitable Ritz projection is required. To clarify its geometric meaning it is formulated in terms of integrals on the evolving curve rather than on the reference domain. For this purpose, we write $ds(t) = |u_{x}(t)| dx$ for the length element and $\partial_{s} = \frac{1}{|u_{x}(t)|} \partial_{x}$ for the arc-length derivative. We also denote with $\langle \cdot , \cdot \rangle_{L^{2}(ds(t))}$ the $L^2$ inner product of functions on the curve. Note that thanks to \eqref{regularity} the corresponding $L^2$ norm is equivalent to the $L^2$ norm for functions on $S^1$ given in \eqref{VH}. Moreover, $\langle f, g_{x} \rangle_{H} = \langle f, \partial_{s} g \rangle_{L^2(ds(t))}$ for any $f \in H$ and $g \in V$.

\begin{defi}[Geometrical Ritz Projection]
For $z \in V$, $t \in [0,T]$, with the usual notation $ds= |u_{x}(t)| dx$, let $\mathcal{R}_{h}(z)\in S_{h} $ be such that
$$ \int_{S^{1}} z \,ds= \int_{S^{1}}\mathcal{R}_{h}z \,ds $$
and
$$ \langle \partial_{s} z, \partial_{s} \varphi_{h}\rangle_{L^{2}(ds(t))}= \int_{S^{1}} \partial_{s} z \, \partial_{s} \varphi_{h} ds = \int_{S^{1}} \partial_{s} (\mathcal{R}_{h} z ) \, \partial_{s} \varphi_{h} ds = \langle \partial_{s} (\mathcal{R}_{h} z ), \partial_{s} \varphi_{h}\rangle_{L^{2}(ds(t))} \quad \forall \, \varphi_{h} \in S_{h}.$$
\end{defi}

Using standard arguments and \eqref{regularity} it is not difficult to show (for instance, see \cite{DE12}) that the Ritz projection exists, is unique, and satisfies the estimates
\begin{align} \label{Ritz1}
 \| (z - \mathcal{R}_{h} (z) )_{x}\|_{H} \leq C \| \partial_{s} z - \partial_{s} \mathcal{R}_{h} (z) \|_{L^{2}(ds(t))} &\leq C h^{s-1} \|z\|_{W^{s,2}(S^{1})} \qquad s=1,2,\\
\label{Ritz2}
 \| z - \mathcal{ R}_{h} (z) \|_{H} \leq C \| z - \mathcal{R}_{h} (z) \|_{L^{2}(ds(t))} &\leq C h^{s} \|z\|_{W^{s,2}(S^{1})} \qquad s=1,2.
\end{align}

Technically, $\mathcal{R}_{h}$ depends on $|u_{x}(t)|$ and thus on $t$, but this dependence is dropped for convenience. However, it is important to bear this in mind when the Ritz projection is applied to $t$-dependent families $z(t) \subset V$. For this reasons, the Ritz projection is no linear operator: $\mathcal{R}_{h}(z(t) \pm z(\tau)) \neq \mathcal{R}_{h}(z(t)) \pm \mathcal{R}_{h}(z(\tau))$ for functions $z(t) \in V$, $t \in [0,T]$.

It is therefore important to determine how close $\mathcal{R}_{h}$ is to a linear operator and how well $\mathcal{R}_{h}$ preserves the H\"older properties from Assumption~\ref{lass-cholder}. To that end we give the following lemma.

\begin{lemma}[Properties of $\mathcal{R}_{h}$]
Let $z(t) \in W^{s,2}(S^{1})$ for any $t \in [0,T]$ and for $s \in \{ 1,2 \}$. Then, for any $t, \tau \in [0,T]$ we have
\begin{align}
\label{RH1}
 \| (\mathcal{R}_{h} (z(t)) -\mathcal{R}_{h} (z(\tau)) )_{x} \|_{H}& \leq C_{g} |t -\tau| h^{s-1}\| z(\tau)\|_{W^{s,2}(S^{1})} + C \| (z(t) -z(\tau) )_{x} \|_{H},\\
 \label{RH2}
 \| \mathcal{R}_{h} (z(t)) -\mathcal{R}_{h} (z(\tau)) \|_{H}& \leq C_{g} |t -\tau| \| z(\tau)\|_{V}+
 C\| z(t) -z(\tau) \|_{V}.
\end{align}
More precisely, we have that
\begin{multline}\label{RH3}
\| (\mathcal{R}_{h} (z(t)) -\mathcal{R}_{h} (z(\tau)) - (z(t) -z(\tau)) )_{x}\|_{H} \\
\leq Ch^{s-1}
\| z(t) -z(\tau)\|_{W^{s,2}(S^{1})} + C_{g} |t -\tau|h^{s-1}
\| z(\tau)\|_{W^{s,2}(S^{1})} ,
\end{multline}
and 
\begin{multline}\label{RH4}
\| \mathcal{R}_{h} (z(t)) -\mathcal{R}_{h} (z(\tau)) - (z(t) -z(\tau)) \|_{H} \\
 \leq C_{g}|\tau-t|h^{s}\| z(\tau)\|_{W^{s,2}(S^{1})} + Ch^{s}
\| z(t) -z(\tau)\|_{W^{s,2}(S^{1})} .
\end{multline}
The constant $C_{g}$ depends on the evolution of the geometry.
In particular, if the velocity $u_{x}$ does not change in time (i.e. the curve is stationary) then $C_{g}=0$.
\end{lemma}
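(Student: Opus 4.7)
The plan is to introduce the frozen-time Ritz projection $\mathcal{R}_h^{(t)}$, which for each fixed $t$ is a linear operator, and to reduce all four estimates to the analysis of the single element
$
\psi := \mathcal{R}_h^{(t)}z(\tau) - \mathcal{R}_h^{(\tau)}z(\tau) \in S_h.
$
Indeed,
\begin{align*}
 \mathcal{R}_h(z(t)) - \mathcal{R}_h(z(\tau)) &= \mathcal{R}_h^{(t)}(z(t) - z(\tau)) + \psi, \\
 \mathcal{R}_h(z(t)) - \mathcal{R}_h(z(\tau)) - (z(t)-z(\tau)) &= (\mathcal{R}_h^{(t)} - \mathrm{Id})(z(t) - z(\tau)) + \psi.
\end{align*}
The $H^1$-seminorm stability $\|(\mathcal{R}_h^{(t)} v)_x\|_H \leq C\|v_x\|_H$ follows by testing the defining equation of $\mathcal{R}_h^{(t)}v$ with itself and using \eqref{regularity}; together with \eqref{Ritz1} and \eqref{Ritz2}, this handles all terms in the two decompositions that do not involve $\psi$, yielding exactly the contributions on the right-hand sides of \eqref{RH1}--\eqref{RH4} that are expressed through $\|z(t)-z(\tau)\|_{\cdot}$.

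Subtracting the defining equations of $\mathcal{R}_h^{(t)}z(\tau)$ and $\mathcal{R}_h^{(\tau)}z(\tau)$, rewritten in the $dx$-form $\int \frac{1}{|u_x|}(\mathcal{R}_h z)_x \varphi_{hx}\, dx = \int \frac{1}{|u_x|} z_x \varphi_{hx}\, dx$, produces the variational equation
\[
\int_{S^1}\frac{1}{|u_x(t)|}\psi_x \varphi_{hx}\, dx = \int_{S^1}\Bigl(\frac{1}{|u_x(t)|} - \frac{1}{|u_x(\tau)|}\Bigr)(z(\tau) - \mathcal{R}_h^{(\tau)}z(\tau))_x \varphi_{hx}\, dx, \qquad \varphi_h \in S_h.
\]
Smoothness of $u$ together with \eqref{regularity} gives the pointwise bound $\bigl|\frac{1}{|u_x(t)|} - \frac{1}{|u_x(\tau)|}\bigr| \leq C_g|t-\tau|$, and $C_g$ obviously vanishes when $u_x$ is independent of $t$, which already proves the last assertion of the lemma. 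Choosing $\varphi_h = \psi$ and applying Cauchy--Schwarz together with \eqref{Ritz1} immediately yields $\|\psi_x\|_H \leq C_g|t-\tau|h^{s-1}\|z(\tau)\|_{W^{s,2}}$, and combining this with the decompositions above delivers \eqref{RH1} and \eqref{RH3}.

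The main difficulty is \eqref{RH4}: the Poincar\'e-type bound $\|\psi\|_H \leq C\|\psi_x\|_H$ only yields the suboptimal rate $|t-\tau|h^{s-1}$, so an additional factor of $h$ must be recovered. I would use an Aubin--Nitsche duality: for $v \in H$, let $w \in W^{2,2}(S^1)$ solve the adjoint problem $-(w_x/|u_x(t)|)_x = v - \bar v_t$ on $S^1$ with zero $|u_x(t)|$-weighted mean, $\bar v_t$ being fixed by solvability; elliptic regularity under \eqref{regularity} gives $\|w\|_{W^{2,2}(S^1)} \leq C\|v\|_H$. Pairing this against $\psi - \bar\psi^t$, where $\bar\psi^t$ is the $|u_x(t)|$-weighted mean of $\psi$, integrating by parts, splitting $w = (w - I_h w) + I_h w$, and inserting the variational equation for $\psi$ with test function $I_h w$ reduces matters to two terms: one involves $\|(w - I_h w)_x\|_H$ and is handled by \eqref{(4.2)bis}; the other, of the form $\int (\frac{1}{|u_x(t)|} - \frac{1}{|u_x(\tau)|})(z(\tau) - \mathcal{R}_h^{(\tau)}z(\tau))_x w_x\, dx$, is controlled by one further integration by parts moving the derivative off $(z(\tau) - \mathcal{R}_h^{(\tau)}z(\tau))$, so that \eqref{Ritz2} supplies the extra power of $h$. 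The weighted mean $\bar\psi^t$ itself is estimated by subtracting the two mean-preservation identities defining $\mathcal{R}_h^{(t)}z(\tau)$ and $\mathcal{R}_h^{(\tau)}z(\tau)$, which gives $\int_{S^1}\psi\,|u_x(t)|\,dx = \int_{S^1}(z(\tau) - \mathcal{R}_h^{(\tau)}z(\tau))(|u_x(t)| - |u_x(\tau)|)\, dx$ and hence $|\bar\psi^t| \leq C_g|t-\tau|h^s\|z(\tau)\|_{W^{s,2}}$ via \eqref{Ritz2}. Combining these two pieces delivers $\|\psi\|_H \leq C_g|t-\tau|h^s\|z(\tau)\|_{W^{s,2}}$, from which \eqref{RH2} and \eqref{RH4} follow.
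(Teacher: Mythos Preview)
Your proposal is correct and follows essentially the same route as the paper, just organised differently. The paper works directly with $R_h := \mathcal{R}_h(z(t))-\mathcal{R}_h(z(\tau))$ and $Z := z(t)-z(\tau)$, derives a single variational identity for $R_h$ (its equation tagged ``start''), and then chooses appropriate test functions ($R_h$ for \eqref{RH1}, $R_h-I_hZ$ for \eqref{RH3}); for \eqref{RH4} it runs Aubin--Nitsche on $R_h-Z-\bar m$ and, exactly as you do, integrates by parts in the term carrying the weight difference so that \eqref{Ritz2} supplies the extra factor of $h$. Your decomposition via the frozen-time linear operator $\mathcal{R}_h^{(t)}$ and the single object $\psi=\mathcal{R}_h^{(t)}z(\tau)-\mathcal{R}_h^{(\tau)}z(\tau)$ is a cleaner packaging of the same identities: the paper's ``term II'' in the duality argument is precisely your variational equation for $\psi$ tested with $I_hw$.

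One small slip to tidy up: in your duality step the variational equation for $\psi$ only accepts discrete test functions, so the ``other term'' you write with $w_x$ actually carries $(I_hw)_x$. You need one more splitting $(I_hw)_x=(I_hw-w)_x+w_x$ before you can integrate by parts (since $I_hw$ is only piecewise linear); the extra piece $\int(\tfrac{1}{|u_x(t)|}-\tfrac{1}{|u_x(\tau)|})(z(\tau)-\mathcal{R}_h^{(\tau)}z(\tau))_x(I_hw-w)_x\,dx$ is harmless, being bounded by $C_g|t-\tau|\,h^{s-1}\|z(\tau)\|_{W^{s,2}}\cdot Ch\|w\|_{W^{2,2}}$. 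The paper makes exactly this split. With that adjustment your argument is complete.
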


\begin{proof}
By definition for any time $t \in [0,T]$ and $\varphi_{h} \in S_{h}$ we have that
$$\int_{S^{1}} (z(t))_{x} \frac{\varphi_{hx}}{|u_{x}(t)|} dx = \int_{S^{1}} ( \mathcal{R}_{h} z(t))_{x} \frac{\varphi_{hx}}{|u_{x}(t)|} dx $$
therefore, for any $t, \tau \in [0,T]$ we have
\begin{align}\label{start}
\int_{S^{1}}& (\mathcal{R}_{h} (z(t)) -\mathcal{R}_{h} (z(\tau)) )_{x} \frac{\varphi_{hx}}{|u_{x}(t)|}dx \\
&= \int_{S^{1}} (z(t) -\mathcal{R}_{h} (z(\tau)) )_{x} \varphi_{hx} \left(\frac{1}{|u_{x}(t)|} - \frac{1}{|u_{x}(\tau)|}\right)dx
+ \int_{S^{1}} (z(t) -z(\tau) )_{x} \frac{\varphi_{hx}}{|u_{x}(\tau)|}dx . \notag
\end{align}
Choosing $\varphi_{h}=\mathcal{R}_{h} (z(t)) -\mathcal{R}_{h} (z(\tau)) $ and using \eqref{regularity} and the regularity of $u$ we obtain
\begin{align*}
\| (\mathcal{R}_{h} (z(t)) -\mathcal{R}_{h} (z(\tau)) )_{x} \|_{H}^{2} &\leq
\int_{S^{1}} |(\mathcal{R}_{h} (z(t)) -\mathcal{R}_{h} (z(\tau)) )_{x}|^{2} \frac{1}{|u_{x}(t)|}dx \\
&\leq C |t -\tau|\| ( z(t) -z(\tau) + z(\tau) -\mathcal{R}_{h} (z(\tau)) )_{x} \|_{H} \| (\mathcal{R}_{h} (z(t)) -\mathcal{R}_{h} (z(\tau)) )_{x} \|_{H}\\
& \quad + C \| (z(t) -z(\tau) )_{x} \|_{H} \| (\mathcal{R}_{h} (z(t)) -\mathcal{R}_{h} (z(\tau)) )_{x} \|_{H}.
\end{align*}
Inequality \eqref{RH1} follows from \eqref{Ritz1}.

Next, we use a Poincar\'{e}-inequality for a map $\funch:S^{1} \to \R$ along the curve $u(t)$:
$$\| \funch - \bar{h}\|_{L^{2}(ds(t))} \leq C\| \partial_{s} \funch \|_{L^{2}(ds(t))} \qquad \text{ where } \bar{h}:= \frac{1}{|S^{1}|}\int_{S^{1}} \funch ds(t)= \frac{1}{|S^{1}|}\int_{S^{1}} \funch |u_{x}(t)| dx.$$
Note that $C$ depends on the length of the curve $u(t)$, which is uniformly bounded in $t$ by \eqref{regularity}. We obtain, again with the help of \eqref{regularity}, that
\begin{align*}
\|\funch\|_{H} \leq C \| \funch\|_{L^{2}(ds(t))} \leq C\| \funch - \bar{h}\|_{L^{2}(ds(t))} + C\| \bar{h}\|_{L^{2}(ds(t))} \leq C \| \funch_{x}\|_{H} + C |\bar{h}|.
\end{align*}
Choosing $\funch:=\mathcal{R}_{h} (z(t)) -\mathcal{R}_{h} (z(\tau)) $ and using \eqref{RH1} we obtain \eqref{RH2} since
\begin{align*}
|\bar{h}| &\leq C \Big|\int_{S^{1}} \mathcal{R}_{h} (z(t)) |u_{x}(t)| - \mathcal{R}_{h} (z(\tau)) |u_{x}(t)| dx \Big|\\
&\leq C \Big|\int_{S^{1}} \mathcal{R}_{h} (z(t)) |u_{x}(t)| - \mathcal{R}_{h} (z(\tau)) |u_{x}(\tau)| dx \Big| + C \Big|\int_{S^{1}} \mathcal{R}_{h} (z(\tau)) ( |u_{x}(\tau)| - |u_{x}(t)|) dx \Big|\\
& \leq C \Big|\int_{S^{1}} z(t) |u_{x}(t)| - z(\tau) |u_{x}(\tau)| dx \Big| + C |t -\tau| \| \mathcal{R}_{h} (z(\tau)) \|_{L^{1}(S^{1})}\\
& \leq C \| z(t) -z(\tau)\|_{H} + C |t -\tau| ( \| z(\tau)\|_{H} +\| \mathcal{R}_{h} (z(\tau)) \|_{H} )\\
&\leq C \| z(t) -z(\tau)\|_{H} + C |t -\tau| ( \| z(\tau)\|_{H} +\| \mathcal{R}_{h} (z(\tau)) -z(\tau)\|_{H} )\\
& \leq C \| z(t) -z(\tau)\|_{H} + C |t -\tau| (\| z(\tau)\|_{H} +h^{s}\| z(\tau)\|_{W^{s,2}(S^{1})})
\end{align*}
where we have used \eqref{Ritz2} in the last inequality.

Next, starting again from \eqref{start} we obtain for $\varphi_{h} \in S_{h}$ 
\begin{align*}
\int_{S^{1}}& [(\mathcal{R}_{h} (z(t)) -\mathcal{R}_{h} (z(\tau))) -(z(t) -z(\tau)) ]_{x} \frac{\varphi_{hx} }{|u_{x}(t)|}dx \\
&= \int_{S^{1}} (z(t) -\mathcal{R}_{h} (z(\tau)) )_{x} \varphi_{hx} \left(\frac{1}{|u_{x}(t)|} - \frac{1}{|u_{x}(\tau)|}\right)dx \\
& \quad
+ \int_{S^{1}} (z(t) -z(\tau) )_{x} \varphi_{hx} \left( \frac{1}{|u_{x}(\tau)|} - \frac{1}{|u_{x}(t)|} \right)dx .
\end{align*}
Choosing $\varphi_{h}=(\mathcal{R}_{h} (z(t)) -\mathcal{R}_{h} (z(\tau))) -I_{h}(z(t) -z(\tau))=
(\mathcal{R}_{h} (z(t)) -\mathcal{R}_{h} (z(\tau))) - (z(t) -z(\tau))+(z(t) -z(\tau)) -I_{h}(z(t) -z(\tau))$
we obtain, setting for simplicity of exposition
\begin{align*}
R_{h}:=\mathcal{R}_{h} (z(t)) -\mathcal{R}_{h} (z(\tau)), \qquad Z:=z(t) -z(\tau), \qquad I_{h}Z:= I_{h}(z(t) -z(\tau))=I_{h}z(t) -I_{h}z(\tau),
\end{align*}
and using \eqref{regularity}
\begin{align*}
d_{0} \|(R_{h}-Z)_{x}\|_{H}^{2}
&\leq
\int_{S^{1}}\frac{|(R_{h}-Z)_{x}|^{2}}{|u_{x}(t)|} dx=\int_{S^{1}}\frac{(R_{h}-Z)_{x} (I_{h}Z-Z)_{x}}{|u_{x}(t)|}\\
& \quad 
 + \int_{S^{1}} (z(\tau) -\mathcal{R}_{h} (z(\tau)) )_{x} (R_{h}-I_{h}Z)_{x}\left(\frac{1}{|u_{x}(t)|} - \frac{1}{|u_{x}(\tau)|}\right)dx \\
& \leq \epsilon \|(R_{h}-Z)_{x}\|_{H}^{2} + C_{\epsilon} \|(I_{h}Z-Z)_{x}\|_{H}^{2} \\
& \quad + C |t -\tau|
\| (z(\tau) -\mathcal{R}_{h}(z(\tau)))_{x}\|_{H} ( \|(R_{h}-Z)_{x}\|_{H} + \|(I_{h}Z-Z)_{x}\|_{H})\\
& \leq \epsilon \|(R_{h}-Z)_{x}\|_{H}^{2} + C_{\epsilon} h^{2(s-1)}\|Z\|_{W^{s,2}(S^{1})}^{2} \\
& \quad + C |t -\tau|
h^{s-1}\| z(\tau)\|_{W^{s,2}(S^{1})} ( \|(R_{h}-Z)_{x}\|_{H} + h^{s-1}\|Z\|_{W^{s,2}(S^{1})})
\end{align*}
where we have used \eqref{Ritz1}, \eqref{(4.2)extra} and \eqref{(4.2)bis} in the last inequality.
This yields
\begin{align*}
\|(R_{h}-Z)_{x}\|_{H} \leq C h^{s-1}\|Z\|_{W^{s,2}(S^{1})} + C |t -\tau|
h^{s-1}\| z(\tau)\|_{W^{s,2}(S^{1})} 
\end{align*}
which in turn gives \eqref{RH3}. Next we want to estimate the $L^{2}$-norm of $R_{h}-Z$ and ``win'' a factor $h$. To that end an Aubin-Nitsche trick must be employed.
With $ds:=ds(t)=|u_{x}(t)|dx$ and using Lax-Milgram we (weakly) solve the problem : find $w:S^{1} \to \R$ such that
\begin{align*}
\partial^{2}_{s} w = R_{h}-Z -\bar{m}, \qquad \int_{S^{1}}w ds=0,
\end{align*}
where (using the definition of the Ritz projection)
\begin{align*} \bar{m}:&=\int_{S^{1}}(R_{h}-Z) ds=\int_{S^{1}}(R_{h}-Z) |u_{x}(t)|dx= \int_{S^{1}} (z(\tau) -\mathcal{R}_{h}(z(\tau)))|u_{x}(t)|dx\\
&= \int_{S^{1}} (z(\tau) -\mathcal{R}_{h}(z(\tau)))(|u_{x}(t)|- |u_{x}(\tau)|)dx .
\end{align*}
Note that using the regularity of $u$ and \eqref{Ritz2} we have
\begin{align}\label{mbarest}
|\bar{m}| \leq C |t-\tau|\|z(\tau) -\mathcal{R}_{h}(z(\tau))\|_{H} \leq C |t-\tau| h^{s}\|z(\tau)\|_{W^{s,2}(S^{1})}.
\end{align}
Moreover, the Poincar\'{e}-inequality (on curves) gives $\| w\|_{L^{2}(ds(t))} \leq C\|\partial_{s} w\|_{L^{2}(ds(t))}$ (with a uniform constant $C$ independent of $t$ thanks to \eqref{regularity})
so that from the weak formulation we immediately infer
$\|\partial_{s} w\|_{L^{2}(ds(t))} \leq C \| R_{h}-Z -\bar{m} \|_{L^{2}(ds(t))} $.
Regularity theory finally yields
\begin{align*}
\| w \|_{W^{2,2}(ds(t))} \leq C \| R_{h}-Z -\bar{m} \|_{L^{2}(ds(t))}
\end{align*}
that is (using \eqref{regularity})
\begin{align}\label{reg-ww}
\| w \|_{W^{2,2}(S^{1})} \leq C \| R_{h}-Z -\bar{m} \|_{L^{2}(S^{1})}.
\end{align}
Exploiting the weak formulation for the solution $w$ we can write
\begin{align*}
\| R_{h}-Z -\bar{m}\|_{L^{2}(ds(t))}^{2} &=\int_{S^{1}} \partial_{s} w \partial_{s} (R_{h}-Z) ds
\\
&=\int_{S^{1}} \partial_{s} (w -w_{h})\partial_{s} (R_{h}-Z) ds + \int_{S^{1}} \partial_{s} w_{h}\partial_{s} (R_{h}-Z) ds=I+II
\end{align*}
where $w_{h} \in S_{h}$ will be chosen later. Using the definition of Ritz operator (and recalling that $ds=ds(t)$) we have that
\begin{align*}
II:&=\int_{S^{1}} \partial_{s} w_{h}\partial_{s} (R_{h}-Z) ds=\int_{S^{1}} \partial_{s} w_{h}\partial_{s} (z(\tau)-\mathcal{R}_{h}(z(\tau))) ds\\
&=\int_{S^{1}} w_{hx}\frac{(z(\tau)-\mathcal{R}_{h}(z(\tau)))_{x}}{|u_{x}(t)|} dx \\
&=\int_{S^{1}} w_{hx}\,(z(\tau)-\mathcal{R}_{h}(z(\tau)))_{x} \Big(\frac{1}{|u_{x}(t)|} - \frac{1}{|u_{x}(\tau)|} \Big) dx\\
&=\int_{S^{1}} (w_{hx}-w_{x})\,(z(\tau)-\mathcal{R}_{h}(z(\tau)))_{x} \Big(\frac{1}{|u_{x}(t)|} - \frac{1}{|u_{x}(\tau)|} \Big) dx \\
& \quad +
\int_{S^{1}} w_{x}\,(z(\tau)-\mathcal{R}_{h}(z(\tau)))_{x} \Big(\frac{1}{|u_{x}(t)|} - \frac{1}{|u_{x}(\tau)|} \Big) dx\\
& =
\int_{S^{1}} (w_{hx}-w_{x})\,(z(\tau)-\mathcal{R}_{h}(z(\tau)))_{x} \Big(\frac{1}{|u_{x}(t)|} - \frac{1}{|u_{x}(\tau)|} \Big) dx \\
&\quad -\int_{S^{1}} w_{xx}\,(z(\tau)-\mathcal{R}_{h}(z(\tau))) \Big(\frac{1}{|u_{x}(t)|} - \frac{1}{|u_{x}(\tau)|} \Big) dx \\
& \quad- \int_{S^{1}} w_{x}\,(z(\tau)-\mathcal{R}_{h}(z(\tau))) \Big(\frac{1}{|u_{x}(t)|} - \frac{1}{|u_{x}(\tau)|} \Big)_{x} dx.
\end{align*}
Therefore, using the regularity of $u$, \eqref{Ritz1}, \eqref{Ritz2}, we obtain
\begin{align*}
II &\leq C|\tau-t|h^{s-1}\| z(\tau)\|_{W^{s,2}(S^{1})} \|( w_{h} -w)_{x}\|_{L^{2}(S^{1})} \\
& \quad + C|\tau-t|h^{s}\| z(\tau)\|_{W^{s,2}(S^{1})} (\|w_{x}\|_{L^{2}(S^{1})} + \|w_{xx}\|_{L^{2}(S^{1})}).
\end{align*}
Choosing $w_{h}=I_{h}w$ and using \eqref{(4.2)}, \eqref{reg-ww}, \eqref{regularity}, we infer
\begin{align*}
II &\leq C|\tau-t|h^{s}\| z(\tau)\|_{W^{s,2}(S^{1})} \| R_{h}-Z -\bar{m} \|_{L^{2}(S^{1})},\\
I & \leq Ch\| R_{h}-Z -\bar{m} \|_{L^{2}(S^{1})} \| (R_{h}-Z)_{x}\|_{L^{2}(S^{1})}.
\end{align*}
Therefore
\begin{align*}
\| R_{h}-Z -\bar{m} \|_{L^{2}(S^{1})} \leq C|\tau-t|h^{s}\| z(\tau)\|_{W^{s,2}(S^{1})} + Ch
\| (R_{h}-Z)_{x}\|_{L^{2}(S^{1})}
\end{align*}
and then, using \eqref{mbarest},
\begin{align*}
\| R_{h}-Z \|_{L^{2}(S^{1})} \leq C|\tau-t|h^{s}\| z(\tau)\|_{W^{s,2}(S^{1})} + Ch
\| (R_{h}-Z)_{x}\|_{L^{2}(S^{1})}. 
\end{align*}
Together with \eqref{RH3} this yields \eqref{RH4}.
\end{proof}

\subsection{Error estimates}

The procedure to estimate the error and to quantify convergence is similar to the proof of the a-priori bounds above.
Analogous ideas can be found also in the proof of \cite[Theorem 4.4]{BDE}, where a domain decomposition approach is analyzed: there however only time-discretization is taken into account.

For later reference let us recall that a solution from Definition~\ref{def_solution} satisfies the equation
\begin{align}\label{eq4c}
\langle c(t_{k}) |u_{x}(t_{k})|, \varphi \rangle_{H} &- \langle c(t_{k-1}) |u_{x}(t_{k-1})|, \varphi \rangle_{H}
 = -\int_{t_{k-1}}^{t_{k}} \langle \partial_{s} c(t'), \partial_{s} \varphi \rangle_{L^{2} (ds(t'))} dt' \notag \\
&-\int_{t_{k-1}}^{t_{k}}
\langle w_{T}(t') c(t'), \partial_{s} \varphi \rangle_{L^{2} (ds(t'))} dt'
+ \sum_{l=1}^{\infty}\int_{t_{k-1}}^{t_{k}} \langle B (c(t')) g_{l} , \varphi \rangle_{H} d \beta_{l}(t')
\end{align}
for all $\varphi \in V$.

\begin{teo}[Error estimates] \label{err-est-g}
Let $c$ be a solution according to Definition~\ref{def_solution} for some initial data $c(0)=c_{0} \in V$, and let Assumption~\ref{lass-cholder} and Assumption~\ref{approximW} hold. Let $c_{h}^{k}$ be computed according to Algorithm~\ref{fullydiscrete-scheme-g}.
Further let $h \leq h_{0}$ and $\triangle t \leq \triangle t_{0}$ sufficiently small.
Let $\triangle t=h$. 
Then the following error estimate holds for any $k=1, \ldots, M$:
\begin{align*}
\mathbb{E}(\|c^{k}_{h}- c(t_{k})\|_{H}^{2})^{\frac{1}{2}}\leq C \sqrt{\epsilon_{W}} + C (\triangle t)^{\frac{\nu_{r}}{2}} + C \mathbb{E}( \|c^{0}_{h}- c_{0}\|_{H}^{2})^{\frac{1}{2}}+Ch.
\end{align*}
\end{teo}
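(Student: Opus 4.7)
The plan is to split the error via the geometric Ritz projection:
\begin{align*}
c^{k}_{h} - c(t_{k}) = \underbrace{(c^{k}_{h} - \mathcal{R}_{h} c(t_{k}))}_{=:e^{k}_{h}\,\in\,S_{h}} + \underbrace{(\mathcal{R}_{h} c(t_{k}) - c(t_{k}))}_{=:\rho^{k}}.
\end{align*}
By the Ritz estimate \eqref{Ritz2} with $s=1$, $\mathbb{E}(\|\rho^{k}\|_{H}^{2})^{1/2} \leq C h\, \mathbb{E}(\|c(t_{k})\|_{V}^{2})^{1/2}$, which is bounded by $Ch$ thanks to Corollary~\ref{c-apriori-est}. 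It therefore remains to control $\mathbb{E}(\|e^{k}_{h}\|_{H}^{2})^{1/2}$, and of course the initial error $\mathbb{E}(\|e^{0}_{h}\|_{H}^{2})^{1/2}$ is in turn bounded in terms of $\mathbb{E}(\|c^{0}_{h}-c_{0}\|_{H}^{2})^{1/2}$ and $Ch$.

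\textbf{Error equation.} The next step is to subtract the continuous equation \eqref{eq4c} tested against $\varphi=\varphi_{h}\in V_{h}$ from the discrete scheme \eqref{eq-3.6-g}, and then insert $\mathcal{R}_{h} c(t_{k})$ and $\mathcal{R}_{h} c(t_{k-1})$ on the left-hand side. The defining property of the Ritz projection cancels the leading diffusion term, since $\int_{t_{k-1}}^{t_k}\langle \partial_{s} c(t'),\partial_{s}\varphi_{h}\rangle_{L^{2}(ds(t'))} dt'$ can be replaced by $\triangle t\,\langle \partial_{s}\mathcal{R}_{h}c(t_{k}),\partial_{s}\varphi_{h}\rangle_{L^{2}(ds(t_{k}))}$ up to quadrature and time-Hölder residuals. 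The resulting identity has the schematic form
\begin{align*}
\langle e^{k}_{h}|u^{k}_{hx}|,\varphi_{h}\rangle_{H}-\langle e^{k-1}_{h}|u^{k-1}_{hx}|,\varphi_{h}\rangle_{H}+\triangle t\,\langle e^{k}_{hx}/|u^{k}_{hx}|,\varphi_{hx}\rangle_{H}=\mathcal{E}_{\text{geo}}+\mathcal{E}_{\text{time}}+\mathcal{E}_{\text{adv}}+\mathcal{E}_{\text{stoch}},
\end{align*}
where $\mathcal{E}_{\text{geo}}$ accounts for $|u^{k}_{hx}|$ vs.\ $|u_{x}(t_{k})|$ (bounded via $h^{2}$ interpolation and \eqref{err-uhx}), $\mathcal{E}_{\text{time}}$ collects the commutator $(\mathcal{R}_{h}c(t_{k})-\mathcal{R}_{h}c(t_{k-1}))-(c(t_{k})-c(t_{k-1}))$ as well as the difference between $\triangle t\,\langle\partial_{s}\mathcal{R}_{h}c(t_{k}),\partial_{s}\varphi_{h}\rangle_{L^{2}(ds(t_{k}))}$ and its integrated counterpart, $\mathcal{E}_{\text{adv}}$ gathers the advection-term discrepancies, and $\mathcal{E}_{\text{stoch}}$ contains the stochastic residual $\int_{t_{k-1}}^{t_{k}}\langle(B(c(t'))-B_{L}(c^{k-1}_{h}))dW(t'),\varphi_{h}\rangle_{H}$.

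\textbf{Testing, estimating, Gronwall.} I would then test with $\varphi_{h}=e^{k}_{h}$ and apply the algebraic identity \eqref{algebra} exactly as in the proof of Lemma~\ref{dis-apriori-g}, producing a telescoping term plus the coercive gradient contribution $\triangle t\,\|e^{k}_{hx}/\sqrt{|u^{k}_{hx}|}\|_{H}^{2}$ and a half-sum of increments $\tfrac{1}{2}\|(e^{k}_{h}-e^{k-1}_{h})\sqrt{|u^{k-1}_{hx}|}\|_{H}^{2}$. The geometric and advection residuals are handled by \eqref{regularityh}--\eqref{bound-wth} and Young inequalities. For the time-consistency residuals I would use the Ritz commutator bounds \eqref{RH3}--\eqref{RH4} together with Assumption~\ref{lass-cholder}, converting $\mathbb{E}(\|c(t')-c(t_{k-1})\|_{V}^{2})^{1/2}\leq C(\triangle t)^{\nu_{r}/2}$ for $t'\in[t_{k-1},t_{k}]$. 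The stochastic residual splits as $B(c(t'))-B(c(t_{k-1}))$ (Hölder in time), $B(c(t_{k-1}))-B(c^{k-1}_{h})$ (Lipschitz \eqref{BH2}, feeding into Gronwall), and $B(c^{k-1}_{h})(I-\proj_{L})$ (controlled by $\epsilon_{W}$ via Assumption~\ref{approximW}); Itô isometry integrates each piece, while the crucial mean-zero cross term $\langle B_{L}(c^{k-1}_{h})\triangle W_{k},e^{k-1}_{h}\rangle_{H}$ vanishes in expectation by $\mathcal{F}_{t_{k-1}}$-adaptedness, as in the a priori argument. Summing over $k$, taking expectation, absorbing the $\triangle t$-weighted gradient terms on the left by Young, and applying the discrete Gronwall Lemma~\ref{discreteGronwall} then yields the claim.

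\textbf{Principal obstacle.} The delicate point is that $\mathcal{E}_{\text{time}}$ contains expressions of size $\|c(t_{k})-c(t_{k-1})\|_{V}$, whose expectation is only $(\triangle t)^{\nu_{r}/2}$ by Assumption~\ref{lass-cholder} and hence much larger than the ``classical'' $\triangle t$ from the $C^{1}$-in-time case. After testing with $e^{k}_{h}$ these enter multiplied by $\triangle t^{-1/2}$ through the telescoping identity, and a naive Young inequality would leave a non-summable residual. This is exactly where the balancing $\triangle t=h$ is invoked: combining \eqref{RH3} with $s=1$ one isolates one factor of $h$ from the Ritz error, which is then paid for by the $\triangle t^{-1/2}$ factor to land at the final order $(\triangle t)^{\nu_{r}/2}+h$. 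The second subtlety is preserving the martingale cancellation in $\mathcal{E}_{\text{stoch}}$, which forces one to freeze the integrand at $t_{k-1}$ and transfer the remainder to a Gronwall-absorbable Lipschitz contribution before taking expectation.
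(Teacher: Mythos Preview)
Your outline is essentially the paper's proof: same Ritz splitting $c^k_h - c(t_k) = e^k + \rho^k$, same subtraction of \eqref{eq4c} from \eqref{eq-3.6-g} to obtain the error identity, same testing with $\varphi_h = e^k$ and use of \eqref{algebra}, same martingale cancellation $\mathbb{E}\langle B_L(c^{k-1}_h)\triangle W_k, e^{k-1}\rangle_H = 0$ followed by It\^o isometry on the remainder, and the same discrete Gronwall closure.

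Two slips are worth flagging. First, Corollary~\ref{c-apriori-est} does not give the pointwise bound $\mathbb{E}(\|c(t_k)\|_V^2)\leq C$ that you use for $\rho^k$ (it only controls $\int_0^T$); the paper obtains the pointwise $V$-bound from $c_0\in V$ together with Assumption~\ref{lass-cholder} via $\|c(t)\|_V\leq\|c(t)-c_0\|_V+\|c_0\|_V$. Second, and more consequential, in your ``Principal obstacle'' you invoke \eqref{RH3} with $s=1$ to extract the factor of $h$ from the Ritz commutator in $\mathcal{E}_{\text{time}}$. But \eqref{RH3} bounds the $x$-derivative and carries $h^{s-1}=1$ when $s=1$, so it gives no gain; the crucial factor of $h$ comes from the $H$-norm Aubin--Nitsche estimate \eqref{RH4}, which yields $h^s=h$. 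The paper applies \eqref{RH4} to the term it calls $IV_1$, obtains $\mathbb{E}(IV_1)\leq Ch(\triangle t)^{\nu_r/2}\mathbb{E}(\|e^k\|_H^2)^{1/2}$, Young's this to $Ch^2(\triangle t)^{\nu_r-1}+C\triangle t\,\mathbb{E}(\|e^k\|_H^2)$, and after summing over $k$ is left with $h^2(\triangle t)^{\nu_r-2}$, which becomes $(\triangle t)^{\nu_r}$ under $h=\triangle t$. That is the actual mechanism by which the coupling enters; the ``$\triangle t^{-1/2}$ through the telescoping identity'' description is not quite accurate.
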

A comment on the coupling between the time and spatial step sizes is provided after the proof.
\begin{proof}
Setting $$e^{k}:= c^{k}_{h} - \mathcal{R}_{h}(c(t_{k}))$$
i.e. $ e^{k}(\omega, \cdot) = c^{k}_{h}(\omega,\cdot) - \mathcal{R}_{h}(c(\omega,t_{k}, \cdot)) \in S_{h}$,
we obtain using \eqref{eq-3.6-g} and \eqref{eq4c} (and for a fixed $\omega \in \Omega$)
\begin{align*}
\langle e^{k} & |u^{k}_{hx}|, \varphi_{h } \rangle_{H} - \langle e^{k-1} |u^{k-1}_{hx}|, \varphi_{h } \rangle_{H} + \triangle t \langle \frac{e^{k}_{x}}{ |u^{k}_{hx}|}, \varphi_{h } \rangle_{H} \\
& = \left[ \langle c^{k}_{h}| u^{k}_{hx}|, \varphi_{h}\rangle_{H} + \triangle t \langle \frac{c^{k}_{hx}}{| u^{k}_{hx}|}, \varphi_{hx} \rangle_{H} - \langle c^{k-1}_{h} | u^{k-1}_{hx}|, \varphi_{h}\rangle_{H}
\right]\\
&\quad - \left[ \langle \mathcal{R}_{h}(c(t_{k}))| u^{k}_{hx}|, \varphi_{h}\rangle_{H} + \triangle t \langle \frac{(\mathcal{R}_{h}(c(t_{k})))_{x}}{| u^{k}_{hx}|}, \varphi_{hx} \rangle_{H} - \langle \mathcal{R}_{h}(c(t_{k-1})) | u^{k-1}_{hx}|, \varphi_{h}\rangle_{H} \right]
\\
&=\left[-\triangle t \langle c^{k-1}_{h}, \varphi_{hx} w_{Th}^{k} \rangle_{H} + \langle \Bapprox(c^{k-1}_{h})\triangle W_{k}, \varphi_{h}\rangle_{H} \right]
\\
& \quad - \big[ \langle \mathcal{R}_{h}(c(t_{k})) | u^{k}_{hx}| - \mathcal{R}_{h}(c(t_{k-1})) | u^{k-1}_{hx}|, \varphi_{h}\rangle_{H} + \triangle t \langle (\mathcal{R}_{h}(c(t_{k})))_{x} ( \frac{1}{| u^{k}_{hx}|} - \frac{1}{| u_{x}(t_{k})|}), \varphi_{hx} \rangle_{H} \big]\\
& \quad - \triangle t \langle (\mathcal{R}_{h}(c(t_{k})))_{x} \frac{1}{| u_{x}(t_{k})|}, \varphi_{hx} \rangle_{H} + \langle c(t_{k}) |u_{x}(t_{k})|-c(t_{k-1}) |u_{x}(t_{k-1})|, \varphi_{h}\rangle_{H} \\
&\quad - 
\big[ \langle c(t_{k}) |u_{x}(t_{k})|, \varphi_{h}\rangle_{H}
- \langle c(t_{k-1}) |u_{x}(t_{k-1})|, \varphi_{h}\rangle_{H} \big]\\
&=\left[-\triangle t \langle c^{k-1}_{h}, \varphi_{hx} w_{Th}^{k} \rangle_{H}
+ \int_{t_{k-1}}^{t_{k}} \langle w_{T} c(t'), \partial_{s} \varphi_{h} \rangle_{L^{2} (ds(t'))} dt' \right]\\
& \quad
+ \left[ \langle \Bapprox(c^{k-1}_{h})\triangle W_{k}, \varphi_{h}\rangle_{H}
-\sum_{l=1}^{\infty}\int_{t_{k-1}}^{t_{k}} \langle B (c(t')) g_{l} , \varphi_{h} \rangle_{H} d \beta_{l}(t') 
\right]
\\
& \quad +\left [ \int_{t_{k-1}}^{t_{k}} \langle \partial_{s} c(t'), \partial_{s} \varphi_{h} \rangle_{L^{2}(ds(t'))} dt' -\triangle t \langle (\mathcal{R}_{h}(c(t_{k})))_{x} \frac{1}{| u_{x}(t_{k})|}, \varphi_{hx} \rangle_{H} \right ]
\\
& \quad +\Big[\langle c(t_{k}) |u_{x}(t_{k})| - c(t_{k-1}) |u_{x}(t_{k-1})|, \varphi_{h}\rangle_{H} -\langle \mathcal{R}_{h}(c(t_{k})) | u^{k}_{hx}| -\mathcal{R}_{h}(c(t_{k-1})) | u^{k-1}_{hx}|, \varphi_{h}\rangle_{H}
\Big]
\\
& \quad
- \triangle t \langle (\mathcal{R}_{h}(c(t_{k})))_{x} ( \frac{1}{| u^{k}_{hx}|} - \frac{1}{| u_{x}(t_{k})|}), \varphi_{hx} \rangle_{H} \\
&=I +II+III+IV+V.
\end{align*}
We now choose $\varphi_{h}=e^{k}$ and, with a slight abuse of notation, still write $I, \ldots, V$ for the corresponding terms. Employing \eqref{algebra}
we infer that
\begin{align} \label{starting-eq}
\frac{1}{2 } & \left \| e^{k}\sqrt{|u^{k}_{hx}|} \right \|^{2}_{H}- \frac{1}{2 } \left \| e^{k-1}\sqrt{|u^{k-1}_{hx}|} \right \|^{2}_{H} + \frac{1}{2 } \left \| (e^{k} -e^{k-1})\sqrt{|u^{k-1}_{hx}|} \right \|^{2}_{H}
+\triangle t \left \| \frac{e_{x}^{k}}{\sqrt{|u^{k}_{hx}| } } \right \|^{2}_{H} \notag \\
&= -\frac{1}{2} \int_{S^{1}}|e^{k}|^{2} (|u^{k}_{hx}|-|u^{k-1}_{hx}|) dx + I +II+III+IV+V \notag\\
&=VI + I +II+III+IV+V.
\end{align}
We will now estimate each term, keeping in mind that at the end we will employ a Gronwall argument to the above equation after integration over $\Omega$.

Using \eqref{err-uhx} and \eqref{regularityh} we immediately infer that
\begin{align*}
VI=-\frac{1}{2} \int_{S^{1}}|e^{k}|^{2} (|u^{k}_{hx}|-|u^{k-1}_{hx}|) dx \leq C \triangle t \| e^{k}\sqrt{|u^{k}_{hx}|} \|^{2}_{H}
\end{align*}
so that
\begin{align}\label{term6}
\mathbb{E}(VI) \leq C \triangle t \,\mathbb{E} ( \| e^{k}\sqrt{|u^{k}_{hx}|} \|^{2}_{H}).
\end{align}

For term $I$ we compute
\begin{align*}
I&= -\triangle t \langle c^{k-1}_{h}, e_{x}^{k} w_{Th}^{k} \rangle_{H}
+ \int_{t_{k-1}}^{t_{k}} \langle w_{T} c(t'), \partial_{s} e^{k} \rangle_{L^{2} (ds(t'))} dt' \\
& = -\int_{t_{k-1}}^{t_{k}} \int_{S^{1}} c^{k-1}_{h} e_{x}^{k} w_{Th}^{k} dx dt' + \int_{t_{k-1}}^{t_{k}} \int_{S^{1}} w_{T} (t') c(t') e^{k}_{x} dx dt'\\
&= \int_{t_{k-1}}^{t_{k}} \int_{S^{1}} (w_{T}(t') -w_{Th}^{k}) c(t') e^{k}_{x} dx dt'
+ \int_{t_{k-1}}^{t_{k}} \int_{S^{1}} w_{Th}^{k} (c(t')- \mathcal{R}_{h}(c(t' )))e^{k}_{x} dx dt'\\
& \quad 
+ \int_{t_{k-1}}^{t_{k}} \int_{S^{1}} w_{Th}^{k} (\mathcal{R}_{h}(c(t' ))- \mathcal{R}_{h} (c(t_{k-1})) )e^{k}_{x} dx dt'
+ \int_{t_{k-1}}^{t_{k}} \int_{S^{1}} w_{Th}^{k} (\mathcal{R}_{h} (c(t_{k-1})) - c^{k-1}_{h} )e^{k}_{x} dx dt'\\
&= I_{1}+I_{2}+I_{3}+I_{4}.
\end{align*}
Using that for any $t', t_{*}' \in [t_{k-1}, t_{k}]$ we have
\begin{align*}
\| w_{T}(t') -w_{Th}^{k}\|_{L^{\infty}(S^{1})} &\leq \| w_{T}(t') - w_{T}(t_{k})\|_{L^{\infty}(S^{1})} + \| w_{T}(t_{k}) -I_{h}( w_{T}(t_{k}))\|_{L^{\infty}(S^{1})} \\
& \leq \triangle t \| \partial_{t} w_{T}(t_{*}') \|_{L^{\infty}(S^{1})} + Ch^{2} \| \partial_{xx} w_{T}(t_{k}) \|_{L^{\infty}(S^{1})}
\end{align*}
we obtain using the smoothness of $w_{T}$, \eqref{regularityh}, \eqref{regularity}, and a Young inequality
\begin{align*}
\mathbb{E}(I_{1}) &\leq C \int_{t_{k-1}}^{t_{k}} \mathbb{E} \Big( (\triangle t + h^{2}) \| c(t') \|_{L^{2}(ds(t'))} \big \| \frac{e^{k}_{x}}{\sqrt{|u^{k}_{hx}|}} \big \|_{L^{2}(S^{1})} \Big) dt'\\
&
\leq \epsilon \triangle t \mathbb{E} \left (\left \| \frac{e_{x}^{k}}{\sqrt{|u^{k}_{hx}| } } \right \| ^{2}_{H} \right) + C_{\epsilon}(\triangle t + h^{2})^{2} \int_{t_{k-1}}^{t_{k}}\mathbb{E} (\| c(t')\|^{2}_{V} )dt'.
\end{align*}
Observe that later on we will make use of Lemma~\ref{c-apriori-est} for the last integral term.
Using \eqref{bound-wth},\eqref{regularityh}, \eqref{Ritz2}, and a Young inequality
 we infer
\begin{align*}
\mathbb{E}(I_{2}) &\leq C \int_{t_{k-1}}^{t_{k}} \mathbb{E} \Big( \| c(t')- \mathcal{R}_{h}(c(t'))\|_{L^{2}(S^{1})} \| \frac{e^{k}_{x}}{\sqrt{|u^{k}_{hx}|}}\|_{L^{2}(S^{1})} \Big) dt'\\
& \leq \epsilon \triangle t \mathbb{E} \left (\left \| \frac{e_{x}^{k}}{\sqrt{|u^{k}_{hx}| } } \right \| ^{2}_{H} \right) + C_{\epsilon} h^{2} \int_{t_{k-1}}^{t_{k}}\mathbb{E} (\| c(t')\|^{2}_{V} )dt'.
\end{align*}
Using \eqref{bound-wth},\eqref{regularityh}, a Young inequality, \eqref{RH2} and Assumption~\ref{lass-cholder}
 we infer
\begin{align*}
\mathbb{E}(I_{3}) &\leq C \int_{t_{k-1}}^{t_{k}} \mathbb{E} \Big( \| \mathcal{R}_{h} (c(t'))- \mathcal{R}_{h}( c(t_{k-1}))\|_{L^{2}(S^{1})} \| \frac{e^{k}_{x}}{\sqrt{|u^{k}_{hx}|}}\|_{L^{2}(S^{1})} \Big) dt'\\
& \leq \epsilon \triangle t \, \mathbb{E} \left (\left \| \frac{e_{x}^{k}}{\sqrt{|u^{k}_{hx}| } } \right \| ^{2}_{H} \right) + C_{\epsilon} \int_{t_{k-1}}^{t_{k}} \mathbb{E} (|t' -t_{k-1}|^2 \|c(t')\|^{2}_{V} + \| c(t')-c(t_{k-1})\|^{2}_{V} ) dt'\\
& \leq \epsilon \triangle t \, \mathbb{E} \left (\left \| \frac{e_{x}^{k}}{\sqrt{|u^{k}_{hx}| } } \right \| ^{2}_{H} \right) + C_{\epsilon} \, (\triangle t)^2 \,\int_{t_{k-1}}^{t_{k}} \mathbb{E} ( \|c(t')\|^{2}_{V} ) dt' + C_{\epsilon} (\triangle t)^{1+\nu_{r}}.
\end{align*}
Finally we obtain from \eqref{bound-wth} and \eqref{regularityh} that
\begin{align*}
\mathbb{E}(I_{4}) \leq \epsilon \triangle t \mathbb{E} \left (\left \| \frac{e_{x}^{k}}{\sqrt{|u^{k}_{hx}| } } \right \| ^{2}_{H} \right) + C_{\epsilon} \, \triangle t \, \mathbb{E} (\| e^{k-1}\sqrt{|u^{k-1}_{hx}|}\|_{H}^{2}).
\end{align*}
Collecting all estimates we obtain that
\begin{align}\label{term1}
\mathbb{E}(I) &\leq \epsilon \triangle t \, \mathbb{E} \left (\left \| \frac{e_{x}^{k}}{\sqrt{|u^{k}_{hx}| } } \right \| ^{2}_{H} \right) + C_{\epsilon} \big{(} (\triangle t + h^{2})^2 + h^2 + \triangle t^2 \big{)} \,\int_{t_{k-1}}^{t_{k}} \mathbb{E} ( \|c(t')\|^{2}_{V} ) dt' \\
& \quad + C_{\epsilon} (\triangle t)^{1+\nu_{r}} + C_{\epsilon} \, \triangle t \, \mathbb{E} (\| e^{k-1}\sqrt{|u^{k-1}_{hx}|}\|_{H}^{2}).
\notag
\end{align}

Next, we consider
\begin{align*}
II= \langle \Bapprox(c^{k-1}_{h})\triangle W_{k}, e^{k}\rangle_{H}
-\sum_{l=1}^{\infty}\int_{t_{k-1}}^{t_{k}} \langle B (c(t')) g_{l} , e^{k} \rangle_{H} d \beta_{l}(t') .
\end{align*}
We observe that, using stochastic independence 
and the fact that time steps of Wiener processes have mean zero,
\begin{align*}
\mathbb{E}(\langle \Bapprox(c^{k-1}_{h})\triangle W_{k}, e^{k-1}\rangle_{H} )=0
\end{align*}
Similarly there holds
\begin{align*}
\mathbb{E}\left(\sum_{l=1}^{\infty}\int_{t_{k-1}}^{t_{k}} \langle B (c(t')) g_{l} , e^{k-1} \rangle_{H} d \beta_{l}(t')
 \right)=0.
\end{align*}
Therefore we can write
\begin{align*}
\mathbb{E}(II)&= \mathbb{E} ( \langle \Bapprox(c^{k-1}_{h})\triangle W_{k}, e^{k}-e^{k-1}\rangle_{H} ) - \mathbb{E} (\sum_{l=1}^{\infty}\int_{t_{k-1}}^{t_{k}} \langle B (c(t')) g_{l} , e^{k}-e^{k-1} \rangle_{H} d \beta_{l}(t') )\\
&=\mathbb{E} ( \langle \int_{t_{k-1}}^{t_{k}}B(c^{k-1}_{h}) \proj_{L} \,dW(t'), e^{k}-e^{k-1}\rangle_{H} ) -
\mathbb{E}( \langle \int_{t_{k-1}}^{t_{k}} B (c(t')) dW(t') , e^{k}-e^{k-1} \rangle_{H} )
\\
&=\mathbb{E} ( \langle \int_{t_{k-1}}^{t_{k}}B(c^{k-1}_{h}) \proj_{L} \,dW(t'), e^{k}-e^{k-1}\rangle_{H} )- \mathbb{E} ( \langle \int_{t_{k-1}}^{t_{k}}B(c^{k-1}_{h}) \,dW(t'), e^{k}-e^{k-1}\rangle_{H} )\\
& \quad + \mathbb{E} ( \langle \int_{t_{k-1}}^{t_{k}}B(c^{k-1}_{h}) \,dW(t'), e^{k}-e^{k-1}\rangle_{H} )
- \mathbb{E}( \langle \int_{t_{k-1}}^{t_{k}} B (c(t')) dW(t') , e^{k}-e^{k-1} \rangle_{H} )\\
&=II_{1}+II_{2}.
\end{align*}
We compute using \eqref{regularityh}
\begin{align*}
II_{1}=&\mathbb{E} ( \langle \int_{t_{k-1}}^{t_{k}}B(c^{k-1}_{h}) (\proj_{L}-Id) \,dW(t'), e^{k}-e^{k-1}\rangle_{H} )\\
& \leq C\mathbb{E}(\|( e^{k}-e^{k-1}) \sqrt{|u^{k-1}_{hx}|}\|_{H}^{2})^{\frac{1}{2}}\,
\mathbb{E} \Big( \Big\| \int_{t_{k-1}}^{t_{k}}B(c^{k-1}_{h}) (\proj_{L}-Id) \,dW(t') \Big \|_{H}^{2} \Big)^{\frac{1}{2}}.
\end{align*}
By the It\^o's isometry (see for instance \cite[(10.24)]{Powell}) and Assumption~\ref{approximW} we deduce that
\begin{align*}
\mathbb{E}\left (\Big \| \int_{t_{k-1}}^{t_{k}} ( B(c^{k-1}_{h}) \proj_{L}- B(c^{k-1}_{h}) ) \,dW(t') \Big \|_{H}^{2} \right ) &= \int_{t_{k-1}}^{t_{k}} \mathbb{E} \left(
\| B(c^{k-1}_{h})(\proj_{L} -Id)\|^{2}_{L_{2}(U,H)} \right) dt'\\
& \leq \int_{t_{k-1}}^{t_{k}}\epsilon_{W} (1+ \mathbb{E}(\|c^{k-1}_{h}\|_{H}^{2})) dt' \\
&\leq \triangle t \, \epsilon_{W} (1+ \mathbb{E}(\|c^{k-1}_{h}\|_{H}^{2})).
\end{align*}
Using the a-priori estimates from Lemma~\ref{dis-apriori-g}, \eqref{regularityh}, and then a Young inequality, we obtain that
\begin{align}\label{term-II1}
II_{1}
\leq 
\frac{1}{4} \mathbb{E}\Big(\|( e^{k}-e^{k-1}) \sqrt{|u^{k-1}_{hx}|}\|_{H}^{2} \Big) +
C \, \triangle t \, \epsilon_{W} .
\end{align}
Also for the second term we use \eqref{regularityh} and compute
\begin{align*}
 II_{2} & =\mathbb{E} ( \langle \int_{t_{k-1}}^{t_{k}} (B(c^{k-1}_{h}) - B(c(t'))\,dW(t'), e^{k}-e^{k-1}\rangle_{H} )
\\
 &\leq C\mathbb{E}\Big(\|( e^{k}-e^{k-1}) \sqrt{|u^{k-1}_{hx}|}\|_{H}^{2} \Big)^{\frac{1}{2}} \mathbb{E}\left ( \Big \| \int_{t_{k-1}}^{t_{k}} ( B(c^{k-1}_{h}) - B(c(t')) ) \,dW(t') \Big \|_{H}^{2} \right )^{\frac{1}{2}}\\
& \leq 
\frac{1}{4} \mathbb{E}\Big(\| ( e^{k}-e^{k-1}) \sqrt{|u^{k-1}_{hx}|}\|_{H}^{2} \Big) + 
C \mathbb{E} \left( \Big \| \int_{t_{k-1}}^{t_{k}} ( B(c^{k-1}_{h}) - B(c(t')) ) \,dW(t') \Big \|_{H}^{2} \right ).
\end{align*}
By the It\^o's isometry, \eqref{BH2}, \eqref{Ritz2}, and \eqref{RH2} we deduce that
\begin{align*}
\mathbb{E} &\left( \Big \| \int_{t_{k-1}}^{t_{k}} ( B(c^{k-1}_{h}) - B(c(t')) ) \,dW(t') \Big \|_{H}^{2} \right ) = \int_{t_{k-1}}^{t_{k}} \mathbb{E}( \| B(c^{k-1}_{h}) - B(c(t'))\|^{2}_{L_{2}(U,H)} ) dt' \\
&\leq C \int_{t_{k-1}}^{t_{k}} \mathbb{E}( \| c^{k-1}_{h} - c(t') \|_{H}^{2}) dt'\\
& \leq C \int_{t_{k-1}}^{t_{k}} \mathbb{E}( \| c^{k-1}_{h}-\mathcal{R}_{h}(c(t_{k-1})) \|_{H}^{2} + \|\mathcal{R}_{h}(c(t_{k-1})) - \mathcal{R}_{h}(c(t'))\|_{H}^{2} + \| \mathcal{R}_{h}(c(t')) - c(t') \|_{H}^{2}) dt'\\
& \leq C \, \triangle t \,\mathbb{E}(\| e^{k-1}\|^{2}_{H})+ C\int_{t_{k-1}}^{t_{k}} C_{g}|t_{k-1}-t'|^{2} \mathbb{E}(\|c(t')\|_{V}^{2}) +C \mathbb{E}(\| c(t')-c(t_{k-1})\|_{V}^{2})dt' \\
& \quad + C h^{2} \int_{t_{k-1}}^{t_{k}} \mathbb{E}( \| c(t')\|^{2}_{V}) dt'.
\end{align*}
Thanks to Assumption~\ref{lass-cholder} and using \eqref{regularityh} infer that
\begin{align}\label{term-II2}
II_{2} & \leq 
\frac{1}{4} \mathbb{E}\Big(\| ( e^{k}-e^{k-1}) \sqrt{|u^{k-1}_{hx}|}\|_{H}^{2} \Big) \\
& \quad +
C \, \triangle t \, \mathbb{E} \Big{(} \| e^{k-1} \sqrt{|u^{k-1}_{hx}|} \|^{2}_{H} \Big{)} + C ((\triangle t)^{2}+h^{2}) \int_{t_{k-1}}^{t_{k}} \mathbb{E}( \| c(t')\|^{2}_{V}) dt'
 + C(\triangle t)^{1+\nu_{r}}. 
 \notag
\end{align}

For term $III$ we write, using the definition of Ritz projection and $\varphi_{h}=e^{k}$,
\begin{align*}
III&=\int_{t_{k-1}}^{t_{k}} \langle \partial_{s} c(t'), \partial_{s} \varphi_{h} \rangle_{L^{2}(ds(t'))} dt' -\triangle t \langle (\mathcal{R}_{h}(c(t_{k})))_{x} \frac{1}{| u_{x}(t_{k})|}, \varphi_{hx} \rangle_{H} \\
&= \int_{t_{k-1}}^{t_{k}} \int_{S^{1}} (c(t'))_{x} \frac{e^{k}_{x}}{|u_{x}(t')|} dx dt'-
\int_{t_{k-1}}^{t_{k}} \int_{S^{1}} (c(t_{k}))_{x} \frac{e^{k}_{x}}{|u_{x}(t_{k})|} dx dt'\\
& =\int_{t_{k-1}}^{t_{k}} \int_{S^{1}} (c(t') )_{x} e^{k}_{x} ( \frac{1}{|u_{x}(t')|} - \frac{1}{|u_{x}(t_{k})|} ) dx dt'
+ \int_{t_{k-1}}^{t_{k}} \int_{S^{1}} ( c(t')-c(t_{k}))_{x} \frac{e^{k}_{x}}{|u_{x}(t_{k})|} dx dt.'
\end{align*}
Using \eqref{regularity}, the regularity of $u$, \eqref{regularityh}, a Young inequality, and Assumption~\ref{lass-cholder} we obtain
\begin{align}\label{term3}
\mathbb{E}(III) \leq \epsilon \, \triangle t \, \mathbb{E} \left (\left \| \frac{e_{x}^{k}}{\sqrt{|u^{k}_{hx}| } } \right \| ^{2}_{H} \right) + C_{\epsilon} \, \triangle t \,\int_{t_{k-1}}^{t_{k}} \mathbb{E} ( \|c(t')\|^{2}_{V} ) dt' +C_{\epsilon} (\triangle t)^{1+\nu_{r}}.
\end{align}

For term IV we write (as before, $\varphi_{h}=e^{k}$)
\begin{align*}
IV&=\langle c(t_{k}) |u_{x}(t_{k})| - c(t_{k-1}) |u_{x}(t_{k-1})|, \varphi_{h}\rangle_{H} -\langle \mathcal{R}_{h}(c(t_{k})) | u^{k}_{hx}| -\mathcal{R}_{h}(c(t_{k-1})) | u^{k-1}_{hx}|, \varphi_{h}\rangle_{H} \\
&=\langle \Big[ \big( c(t_{k}) - c(t_{k-1})\big) -\big(\mathcal{R}_{h}(c(t_{k})) - \mathcal{R}_{h}(c(t_{k-1})) \big) \Big]|u_{x}(t_{k})| , \varphi_{h}\rangle_{H} \\
& \quad+ \langle ( c(t_{k-1})-\mathcal{R}_{h}(c(t_{k-1}))) ( |u_{x}(t_{k})| - |u_{x}(t_{k-1})|), \varphi_{h}\rangle_{H} \\
& \quad+ \langle (\mathcal{R}_{h}(c(t_{k})) - \mathcal{R}_{h}(c(t_{k-1}))) (|u_{x}(t_{k})| - | u^{k}_{hx}|), \varphi_{h}\rangle_{H}
\\
&\quad+\langle \mathcal{R}_{h}(c(t_{k-1})) [( |u_{x}(t_{k})| - |u_{x}(t_{k-1})|)-( | u^{k}_{hx}| - | u^{k-1}_{hx}|)], \varphi_{h}\rangle_{H}\\
& = IV_{1}+IV_{2}+IV_{3}+IV_{4}.
\end{align*}
Note that $\| c(t) \|_V \leq \| c(t) - c(0) \|_V + \| c(0) \|_V$. As $c(0) \in V$, the regularity assumption \eqref{ass-cholder} yields that $\mathbb{E}(\| c(t) \|_V) \leq C$ at all times $t \in [0,T]$. Thanks to \eqref{RH4} we obtain that
\begin{align*}
IV_{1} \leq C_{g} h \| e^{k} \|_{H} \, \triangle t \, \| c(t_{k-1}) \|_{V}
+ C h \, \| c(t_{k}) - c(t_{k-1})\|_{V} \| e^{k}\|_{H}
\end{align*}
so that, using \eqref{ass-cholder} and recalling that $\nu_{r} \leq 1$,
\begin{align*}
\mathbb{E}(IV_{1}) \leq C h (\triangle t)^{\frac{\nu_{r}}{2}} \mathbb{E}(\| e^{k}\|_{H}^{2})^{\frac{1}{2}}.
\end{align*}
Using \eqref{Ritz2} and the regularity of $u$ we see that
\begin{align*}
IV_{2} \leq C h \, \| c(t_{k-1}) \|_{V} \, \triangle t \, \| e^{k}\|_{H} \quad
\end{align*}
so that
\begin{align*}
\mathbb{E}(IV_{2}) \leq C \, h \, \triangle t \, \mathbb{E}(\| e^{k}\|_{H}^{2})^{\frac{1}{2}}.
\end{align*}
Using \eqref{RH2} and the regularity of $u$ we can write
\begin{align*}
IV_{3} \leq Ch \Big(C_{g} \, \triangle t \, \| c(t_{k}) \|_{V} + C \| c(t_{k}) - c(t_{k-1})\|_{V} \Big) \| e^{k}\|_{H}
\end{align*}
so that, using \eqref{ass-cholder},
\begin{align*}
\mathbb{E}(IV_{3}) \leq C h (\triangle t)^{\frac{\nu_{r}}{2}} \mathbb{E}(\| e^{k}\|_{H}^{2})^{\frac{1}{2}}.
\end{align*}
For the last term of $IV$ we write
\begin{align*}
IV_{4}& =\langle \mathcal{R}_{h}(c(t_{k-1})) [( |u_{x}(t_{k})| - |u_{x}(t_{k-1})|)-( | u^{k}_{hx}|- | u^{k-1}_{hx}|)], e^{k}\rangle_{H}\\
&=\langle (c_{h}^{k-1}-e^{k-1})[( |u_{x}(t_{k})| - |u_{x}(t_{k-1})|)-( | u^{k}_{hx}|- | u^{k-1}_{hx}|)], e^{k}\rangle_{H}.
\end{align*}
Using the smoothness of $u$ as well as \eqref{regularity} and \eqref{regularityh}, and recalling that
a vector valued map $v$ satisfies $|v|_{t}=\frac{v}{|v|} \cdot v_{t}$, we have by Taylor expansions that
\begin{align*}
|u^{k}_{hx}(x)|&=|(I_{h}(u(t_{k},x))_{x}| = |\sum_{j}u(t_{k}, x_{j})\phi_{jx}(x)|\\
&=|\sum_{j}u(t_{k-1}, x_{j})\phi_{jx}(x)| +\frac{(\sum_{j}u(t_{k-1}, x_{j})\phi_{jx}(x))}{|\sum_{j}u(t_{k-1}, x_{j})\phi_{jx}(x)|}\cdot (\sum_{j}u_{t}(t_{k-1}, x_{j})\phi_{jx}(x)) \triangle t + O((\triangle t)^{2})\\
&=|u^{k-1}_{hx}(x)| + \frac{u^{k-1}_{hx}(x)}{|u^{k-1}_{hx}(x)|} \cdot (I_{h}(u_{t}(t_{k-1},x)))_{x}
\triangle t + O((\triangle t)^{2})
\end{align*}
and
\begin{align*}
|u_{x}(t_{k},x)| = |u_{x}(t_{k-1},x)| + \frac{u_{x}(t_{k-1},x)}{|u_{x}(t_{k-1},x)|}\cdot u_{xt}(t_{k-1},x) \triangle t + O((\triangle t)^{2}).
\end{align*}
Therefore, using
\eqref{regularity} and \eqref{regularityh} again,
\begin{align*}
|(& |u_{x}(t_{k},x)| - |u_{x}(t_{k-1},x)|)-( | u^{k}_{hx}(x)|- | u^{k-1}_{hx}(x)|)| \notag \\
& =\left|\frac{u^{k-1}_{hx}(x)}{|u^{k-1}_{hx}(x)|} \cdot (I_{h}(u_{t}(t_{k-1},x)))_{x} -\frac{u_{x}(t_{k-1},x)}{|u_{x}(t_{k-1},x)|}\cdot u_{xt}(t_{k-1},x) \right| \triangle t +O((\triangle t)^{2}) \notag \\
&\leq C h \triangle t + C(\triangle t)^{2} .
\end{align*}
Using Lemma~\ref{dis-apriori-g} for $c_{h}^{k-1}$ yields that
\begin{align*}
\mathbb{E}(IV_{4}) \leq C(h \triangle t + (\triangle t)^{2}) \Big{(} \mathbb{E}(\| e^{k}\|_{H}^{2})^{\frac{1}{2}} + \mathbb{E}(\| e^{k-1}\|_{H}^{2})^{\frac{1}{2}} \mathbb{E}(\| e^{k}\|_{H}^{2})^{\frac{1}{2}} \Big{)}.
\end{align*}
Putting all estimates together we obtain that 
\begin{align}\label{term-4}
\mathbb{E}(IV) &\leq C(h \triangle t + (\triangle t)^{2}) \Big{(} \mathbb{E}(\| e^{k}\|_{H}^{2})^{\frac{1}{2}} + \mathbb{E}(\| e^{k-1}\|_{H}^{2})^{\frac{1}{2}} \mathbb{E}(\| e^{k}\|_{H}^{2})^{\frac{1}{2}} \Big{)}
+ Ch (\triangle t)^{\frac{\nu_{r}}{2}} \mathbb{E}(\| e^{k}\|_{H}^{2})^{\frac{1}{2}} \notag\\
& \leq 
C(h \triangle t + (\triangle t)^{2}) \Big{(} 1 + \mathbb{E}(\| e^{k}\sqrt{|u^{k}_{hx}|} \|^{2}_{H}) + \mathbb{E}(\| e^{k-1}\sqrt{|u^{k-1}_{hx}|} \|^{2}_{H}) \Big{)}
\\
& \quad +Ch^{2} (\triangle t)^{\nu_{r}-1} + C \,\triangle t \, \mathbb{E} ( \| e^{k}\sqrt{|u^{k}_{hx}|} \|^{2}_{H}) \notag
\end{align}
where we have used \eqref{regularityh} in the last inequality.

Regarding the last term that we need to estimate we write
\begin{align*}
V&=- \triangle t \langle (\mathcal{R}_{h}(c(t_{k})))_{x} ( \frac{1}{| u^{k}_{hx}|} - \frac{1}{| u_{x}(t_{k})|}), \varphi_{hx} \rangle_{H} \\
&= \triangle t \int_{S^{1}} |e^{k}_{x}|^{2} ( \frac{1}{| u^{k}_{hx}|} - \frac{1}{| u_{x}(t_{k})|}) dx
- \triangle t \int_{S^{1}} (c^{k}_{h})_{x} e^{k}_{x}( \frac{1}{| u^{k}_{hx}|} - \frac{1}{| u_{x}(t_{k})|}) dx.
\end{align*}
Using \eqref{regularity}, \eqref{regularityh}, \eqref{(4.2)bis} together with the regularity of $u$ and a Young inequality, we obtain
\begin{align}\label{term5}
\mathbb{E}(V) \leq C \, \triangle t \, h \, \mathbb{E} \left (\left \| \frac{e_{x}^{k}}{\sqrt{|u^{k}_{hx}| } } \right \| ^{2}_{H} \right) + C \, \triangle t \, h \, \mathbb{E} \left (\left \| \frac{c^{k}_{hx}}{\sqrt{|u^{k}_{hx}| } } \right \| ^{2}_{H} \right) .
\end{align}

Finally, starting from \eqref{starting-eq}, integrating over $\Omega$, using \eqref{term6}, \eqref{term1}, \eqref{term-II1}, \eqref{term-II2}, \eqref{term3}, \eqref{term-4}, \eqref{term5},
choosing $\epsilon$, $h \leq h_{0}$, and $\triangle t \leq \triangle t_{0}$ sufficiently small we obtain that
\begin{align*} 
\mathbb{E} \Big( \frac{1}{2} &\| e^{k}\sqrt{|u^{k}_{hx}|} \|^{2}_{H} - \frac{1}{2} \| e^{k-1}\sqrt{|u^{k-1}_{hx}|}\|^{2}_{H} + \frac{1}{2} \|(e^{k} -e^{k-1})\sqrt{|u^{k-1}_{hx}|}\|^{2}_{H}
+\frac{1}{2}\triangle t \left \| \frac{e_{x}^{k}}{\sqrt{|u^{k}_{hx}|}} \right \|^{2}_{H} \Big) \notag \\
& \leq C \triangle t \,\mathbb{E} \Big{(} \| e^{k}\sqrt{|u^{k}_{hx}|} \|^{2}_{H} \Big{)} \\
& \quad + C ( (\triangle t)^{2} + h^{2}) \,\int_{t_{k-1}}^{t_{k}} \mathbb{E} ( \|c(t')\|^{2}_{V} ) dt' + C(\triangle t)^{1+\nu_{r}} + C \triangle t \, \mathbb{E} \Big{(} \| e^{k-1}\sqrt{|u^{k-1}_{hx}|}\|_{H}^{2} \Big{)}
\\ 
& \quad + 
\frac{2}{4} \mathbb{E}\Big(\|( e^{k}-e^{k-1}) \sqrt{|u^{k-1}_{hx}|}\|_{H}^{2} \Big) + 
C \, \triangle t \, \epsilon_{W} \\
& \quad 
+ C \triangle t \, \mathbb{E} \Big{(} \| e^{k-1}\sqrt{|u^{k-1}_{hx}|}\|_{H}^{2} \Big{)} + C ((\triangle t)^{2}+h^{2}) \int_{t_{k-1}}^{t_{k}} \mathbb{E}( \| c(t')\|^{2}_{V}) dt'
 + C(\triangle t)^{1+\nu_{r}} 
\\
& \quad + C \triangle t \,\int_{t_{k-1}}^{t_{k}} \mathbb{E} ( \|c(t')\|^{2}_{V} ) dt' + C (\triangle t)^{1+\nu_{r}} \\
& \quad
+ C (h \triangle t + (\triangle t)^{2}) \Big{(} 1 + \mathbb{E}(\| e^{k}\sqrt{|u^{k}_{hx}|} \|^{2}_{H}) + \mathbb{E}(\| e^{k-1}\sqrt{|u^{k-1}_{hx}|} \|^{2}_{H}) \Big{)}
\\
& \quad + C h^{2} (\triangle t)^{\nu_{r}-1} + C \, \triangle t \, \mathbb{E} \Big{(} \| e^{k} \sqrt{|u^{k}_{hx}|} \|^{2}_{H} \Big{)} \\
& \quad + C \, \triangle t \, h \, \mathbb{E} \left (\left \| \frac{c^{k}_{hx}}{\sqrt{|u^{k}_{hx}| } } \right \| ^{2}_{H} \right).
\end{align*}
Summing up for $k=1, \ldots, m \leq M$, using that $m (\triangle t) \leq M (\triangle t) =T$, Corollary~\ref{c-apriori-est}, Lemma~\ref{dis-apriori-g}, \eqref{regularityh},
we obtain that
\begin{align*}
\mathbb{E} \Big( \frac{1}{2 } &\| e^{m}\sqrt{|u^{m}_{hx}|} \|^{2}_{H} - \frac{1}{2 } \| e^{0}\sqrt{|u^{0}_{hx}|}\|^{2}_{H} \Big) \\
& \leq C (\triangle t + h \triangle t + (\triangle t)^{2} )\,\mathbb{E} ( \| e^{m}\sqrt{|u^{m}_{hx}|} \|^{2}_{H}) \\
& \quad + C \sum_{k=0}^{m-1}(\triangle t + h \triangle t + (\triangle t)^{2} )\,\mathbb{E} ( \| e^{k}\sqrt{|u^{k}_{hx}|} \|^{2}_{H}) \\
& \quad + C \epsilon_{W} + C (\triangle t + h)
+ C (\triangle t)^{\nu_{r}} + Ch^{2} (\triangle t)^{\nu_{r} -2}.
\end{align*}
By possibly decreasing $h_{0}$ and $\Delta t_{0}$ again
we can absorb the first term on the right hand-side and infer that
\begin{align*}
\mathbb{E} \Big{(} \|& e^{m}\sqrt{|u^{m}_{hx}|} \|^{2}_{H} \Big{)} \leq C \mathbb{E} \Big{(} \| e^{0}\sqrt{|u^{0}_{hx}|}\|^{2}_{H} \Big{)} +
C \sum_{k=0}^{m-1}(\triangle t + h \triangle t )\,\mathbb{E} \Big{(} \| e^{k}\sqrt{|u^{k}_{hx}|} \|^{2}_{H} \Big{)} \\
&+ C \epsilon_{W} + C (\triangle t + h)
+ C (\triangle t)^{\nu_{r}} + Ch^{2} (\triangle t)^{\nu_{r} -2}.
\end{align*}
Application of Gronwall (Lemma~\ref{discreteGronwall}) and using that $m \triangle t \leq T$ yields that
\begin{align*}
\mathbb{E} \Big{(} \| e^{m}\sqrt{|u^{m}_{hx}|} \|^{2}_{H} \Big{)} & \leq C \left( \mathbb{E} \Big{(} \| e^{0} \sqrt{|u^{0}_{hx}|}\|^{2}_{H} \Big{)} +
 \epsilon_{W} + (\triangle t)^{\nu_{r}} + h + h^{2} (\triangle t)^{\nu_{r} -2}\right).
\end{align*}
so that with $h=\triangle t$ we obtain that
\begin{align}\label{quasi-finale}
 \mathbb{E} (\| e^{m}\sqrt{|u^{m}_{hx}|} \|^{2}_{H}) &\leq C \left( \mathbb{E}(\| e^{0}\sqrt{|u^{0}_{hx}|}\|^{2}_{H})
 + \epsilon_{W} + (\triangle t)^{\nu_{r}} + h \right) .
\end{align}
The error $c^{m}_{h}- c(t_{m})$ is split into
\begin{align*}
c^{m}_{h}- c(t_{m})= (c^{m}_{h}- \mathcal{R}_{h}(c(t_{m}))) + (\mathcal{R}_{h}(c(t_{m}))- c(t_{m})) =
e^{m} + (\mathcal{R}_{h}(c(t_{m}))- c(t_{m})).
\end{align*}
Using \eqref{Ritz2} and $c(0) \in V$ we infer that
\begin{align*}
\| e^{0}\|_{H} \leq \|c^{0}_{h}- c(0)\|_{H} + \|\mathcal{R}_{h}(c(0))- c(0)\|_{H} \leq \|c^{0}_{h}- c(0)\|_{H}+Ch,
\end{align*}
as well as 
\begin{align*}
\|c^{m}_{k}- c(t_{m})\|_{H}^{2} \leq C\| e^{m }\|_{H}^{2}+ Ch^{2} (\| c(t_{m}) -c(0)\|_{V}^{2}+ \| c(0)\|_{V}^{2}).
\end{align*}
Together with \eqref{regularityh}, \eqref{quasi-finale}, and \eqref{ass-cholder} we obtain
\begin{align*}
\mathbb{E}(\|c^{m}_{h}- c(t_{m})\|_{H}^{2}) \leq C(\epsilon_{W} + (\triangle t)^{\nu_{r}} + h )
+ C \mathbb{E}( \|c^{0}_{h}- c(0)\|^{2}_{H})+Ch^{2}
\end{align*}
and the claim follows (using again that $h=\triangle t$). 
\end{proof}

\subsection{Remarks and refinements}
\label{sec:remarks}

We conclude the numerical analysis with some comments and generalisations.

\begin{rem}\label{rem3.11}
If we have higher regularity of the solution in space and time (for instance, when $B=0$) then we can recover standard error estimates
(cp. with \cite[Theorem~2.4]{DE12}). We exemplary discuss the treatment of term~$IV$
if the solution is differentiable with respect to time such that $c_{t} \in L^2(\Omega;L^2((0,T);W^{s,2}(S^{1})))$ for $s \in \{ 1,2 \}$.

First of all, noting that (without loss of generality we assume that $t > \tau$)
\begin{align}\label{buba}
\| z(t) - z(\tau) \|_{W^{s,2}(S^{1})} &=\|\int_{\tau}^{t} z_{t}(t') dt' \|_{W^{s,2}(S^{1})}\leq
\int_{\tau}^{t}\| z_{t}(t') \|_{W^{s,2}(S^{1})} dt' \notag \\
&\leq \sqrt{|t -\tau|} (\int_{\tau}^{t}\| z_{t}(t') \|_{W^{s,2}(S^{1})}^{2} dt')^{\frac{1}{2}}
\end{align}
we can replace \eqref{RH2} by 
\begin{align}\label{RH2-reg}
\| \mathcal{R}_{h} (z(t)) -\mathcal{R}_{h} (z(\tau)) \|_{H}& \leq C_{g} |t -\tau| \| z(\tau)\|_{V}+
 C \sqrt{|t -\tau|} (\int_{\tau}^{t}\| z_{t}(t') \|_{V}^{2} dt')^{\frac{1}{2}}
\end{align}
and, similarly, \eqref{RH4} by
\begin{align}\label{RH4-reg}
\| \mathcal{R}_{h} (z(t))& -\mathcal{R}_{h} (z(\tau)) - (z(t) -z(\tau)) \|_{H} \\
&\leq C_{g}|\tau-t|h^{s}\| z(\tau)\|_{W^{s,2}(S^{1})} + Ch^{s}
 \sqrt{|t -\tau|} (\int_{\tau}^{t}\| z_{t}(t') \|_{W^{s,2}(S^{1})}^{2} dt')^{\frac{1}{2}} .\notag
\end{align}
Thanks to the additional factor $\sqrt{|t -\tau|}$ no $\triangle t$ is ``lost'' in the summation during the Gronwall argument in the proof of Theorem~\ref{err-est-g}.
To see this, we estimate term $IV_{1}$ (defined before \eqref{term-4}) proceeding in a classical way: 
using the definition of the Ritz projection we write
\begin{align*}
IV_{1}&=\langle \Big[ \big( c(t_{k}) - c(t_{k-1})\big) -\big(\mathcal{R}_{h}(c(t_{k})) - \mathcal{R}_{h}(c(t_{k-1})) \big) \Big]|u_{x}(t_{k})| , e^{k}\rangle_{H} \\
&=\langle \Big[ \big( c(t_{k}) - c(t_{k-1})\big) -\big(\mathcal{R}_{h}(c(t_{k})) - \mathcal{R}_{h}(c(t_{k-1})) \big) \Big]|u_{x}(t_{k})| , e^{k}- (\frac{1}{|S^{1}|} \int_{S^{1}} e^{k} dx)\rangle_{H} \\
& \quad +\langle \Big[ \big( c(t_{k}) - c(t_{k-1})\big) -\big(\mathcal{R}_{h}(c(t_{k})) - \mathcal{R}_{h}(c(t_{k-1})) \big) \Big]|u_{x}(t_{k})| , (\frac{1}{|S^{1}|} \int_{S^{1}} e^{k} dx)\rangle_{H} \\
&=\langle \Big[ \big( c(t_{k}) - c(t_{k-1})\big) -\big(\mathcal{R}_{h}(c(t_{k})) - \mathcal{R}_{h}(c(t_{k-1})) \big) \Big]|u_{x}(t_{k})| , e^{k}- (\frac{1}{|S^{1}|} \int_{S^{1}} e^{k} dx)\rangle_{H} \\
& \quad -\langle \Big[ c(t_{k-1}) - \mathcal{R}_{h}(c(t_{k-1})) \Big](|u_{x}(t_{k})|- |u_{x}(t_{k-1})|) , (\frac{1}{|S^{1}|} \int_{S^{1}} e^{k} dx)\rangle_{H} \\
&\leq C\|\big( c(t_{k}) - c(t_{k-1})\big) -\big(\mathcal{R}_{h}(c(t_{k})) - \mathcal{R}_{h}(c(t_{k-1})) \big) \|_{H} \| e^{k}_{x}\|_{H}\\
&\quad+
\| \Big[ c(t_{k-1}) - \mathcal{R}_{h}(c(t_{k-1})) \Big](|u_{x}(t_{k})|- |u_{x}(t_{k-1})|) \|_{H} \| e^{k}\|_{H}
\end{align*}
where we have used Poincar\'{e} and \eqref{regularity} in the last inequality.
Applying \eqref{RH4-reg} and \eqref{Ritz2} yields that
\begin{align*}
IV_{1} & \leq \| e^{k}_{x} \|_{H} \Big(C \, \triangle t \, h^{s} \, \| c(t_{k-1}) \|_{W^{s,2}(S^{1})} + C \, h^{s} \sqrt{\triangle t} (\int_{t_{k-1}}^{t_{k}}\| c_{t}(t') \|_{W^{s,2}(S^{1})}^{2} dt')^{\frac{1}{2}} \Big)\\
& \quad + C \| e^{k} \|_{H} \, \triangle t \, h^{s} \, \| c(t_{k-1}) \|_{W^{s,2}(S^{1})} \\
& \leq \epsilon \triangle t\left (\left \| \frac{e_{x}^{k}}{\sqrt{|u^{k}_{hx}| } } \right \| ^{2}_{H} \right) + C_{\epsilon} \, h^{2s} \, \int_{t_{k-1}}^{t_{k}}\| c_{t}(t') \|_{W^{s,2}(S^{1})}^{2} dt' \\
& \quad + C \, \triangle t \, \| e^{k} \sqrt{|u^{k}_{hx}|} \|^{2}_{H}) + C_{\epsilon} \, h^{2s} \, \triangle t \, 
( \| c(t_{k-1}) - c(0)\|_{W^{s,2}(S^{1})}^{2} +\| c(0)\|_{W^{s,2}(S^{1})}^{2}).
\end{align*}
Observe that, in this case, coupling of spatial and time step size is not required.
\end{rem}

\begin{rem}
In Theorem~\ref{err-est-g} we had to couple the spatial with the time step sizes. This is due to the term $h^{2} (\triangle t)^{\nu_{r} -2}$ in the error estimate, which can be traced back to estimating term $IV$ (see \eqref{term-4} and before) and then from the manipulation
\begin{align*}
Ch (\triangle t)^{\frac{\nu_{r}}{2}} \mathbb{E}(\| e^{k}\|_{H}^{2})^{\frac{1}{2}} \leq
Ch^{2} (\triangle t)^{\nu_{r}-1} + C(\triangle t) \mathbb{E} ( \| e^{k}\sqrt{|u^{k}_{hx}|} \|^{2}_{H}) .
\end{align*}
In the summation step of Gronwall argument we ``loose'' another $\triangle t$ and arrive at the term $h^{2} (\triangle t)^{\nu_{r} -2}$.
If Assumption~\ref{lass-cholder} is replaced by the slightly stronger Assumption~\ref{lass-cholder-strong} below then we maintain the factor $h^{2} (\triangle t)^{\nu_{r} -1}$ also after summation in the Gronwall argument.
This does not affect the final estimation -- unless $\nu_{r}=1$: in that case, no coupling of spatial and time step sizes is required.

In fact, Assumption~\ref{lass-cholder-strong} allows us to incorporate the smooth case (see Remark~\ref{remNU1} and the higher regularity case discussed in Remark~\ref{rem3.11}) in one unique statement (see Theorem~\ref{err-est-g-2} below). Moreover it shows quite clearly that the ``limit case'' where a coupling of the time step with the grid size is needed takes place at $\nu_{r}=1$ and not at the value $\nu_{r}=2$.
\end{rem}
\begin{ass}\label{lass-cholder-strong}
There exists $\nu_{r} \in [0,1]$ and a positive map $\eta \in L^{1}(0,T)$ such that
 \begin{align}\label{ass-cholder-strong}
 \sup_{t, \tau \in [0,T], t < \tau} \frac{(\mathbb{E}(\| c(t) - c(\tau) \|_{V}^{2}))^{\frac{1}{2}}}{|t -\tau|^{\frac{\nu_{r}}{2} }} \leq (\int_{t}^{\tau} \eta(r) dr )^{\frac{1}{2}}.
 \end{align}
\end{ass}
\begin{teo}[Error estimates] \label{err-est-g-2}
Let $c$ be a solution according to Definition~\ref{def_solution} for some initial data $c(0)=c_{0} \in V$, and let Assumption~\ref{lass-cholder-strong} and Assumption~\ref{approximW} hold. Let $c_{h}^{k}$ be computed according to Algorithm~\ref{fullydiscrete-scheme-g}.
Further let $h \leq h_{0}$ and $\triangle t \leq \triangle t_{0}$ sufficiently small.
Let $\triangle t=h$ if $\nu_{r}<1$.
Then the following error estimate holds for any $k=1, \ldots, M$:
\begin{align*}
\mathbb{E}(\|c^{k}_{h}- c(t_{k})\|_{H}^{2})^{\frac{1}{2}}\leq C \sqrt{\epsilon_{W}} + C (\triangle t)^{\frac{\nu_{r}}{2}} + C \mathbb{E}( \|c^{0}_{h}- c_{0}\|_{H}^{2})^{\frac{1}{2}}+Ch.
\end{align*}
\end{teo}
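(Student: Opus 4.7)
My plan is to follow the structure of the proof of Theorem~\ref{err-est-g} verbatim, setting $e^k := c^k_h - \mathcal{R}_h(c(t_k))$, subtracting \eqref{eq-3.6-g} and \eqref{eq4c} tested against $\varphi_h = e^k$, and decomposing the resulting identity into the same six terms $VI + I + II + III + IV + V$ as in \eqref{starting-eq}. Terms $VI$, $II_1$, $IV_4$ and $V$ do not depend on the temporal Hölder-regularity of $c$ and are estimated exactly as before. The novelty is concentrated in the terms where $\mathbb{E}(\|c(t) - c(\tau)\|_V^2)$ appears after a Ritz/interpolation step, namely $I_3$, $II_2$, $III$ and, most importantly, $IV_1$ and $IV_3$.

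The key idea is that Assumption~\ref{lass-cholder-strong} furnishes the bound $\mathbb{E}(\|c(t)-c(\tau)\|_V^2) \leq |t-\tau|^{\nu_r} \int_t^\tau \eta(r)\,dr$, so the non-negative density $\eta \in L^1(0,T)$ can be kept inside the sum over $k$ rather than absorbed into a global Hölder constant. Concretely, whenever the original proof estimated an integral $\int_{t_{k-1}}^{t_k}\mathbb{E}(\|c(t')-c(t_{k-1})\|_V^2)\,dt'$ by $C(\triangle t)^{1+\nu_r}$, we now write it as $(\triangle t)^{\nu_r}\,\triangle t\int_{t_{k-1}}^{t_k}\eta(r)\,dr$, whose sum over $k$ telescopes to $(\triangle t)^{1+\nu_r}\|\eta\|_{L^1(0,T)}$. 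This reproduces the old bound but, crucially, the mechanism now also applies to the worst term $IV_1$, where previously only a pointwise Cauchy--Schwarz bound $Ch(\triangle t)^{\nu_r/2}\mathbb{E}(\|e^k\|_H^2)^{1/2}$ was available.

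For $IV_1$ I estimate, using \eqref{RH4} on the difference of Ritz projections and then Cauchy--Schwarz in $\omega$,
\begin{align*}
\mathbb{E}(IV_1) &\leq C h\, \mathbb{E}\bigl(\|c(t_k)-c(t_{k-1})\|_V^2\bigr)^{1/2}\mathbb{E}(\|e^k\|_H^2)^{1/2} + C_g h\,\triangle t\,\mathbb{E}(\|c(t_{k-1})\|_V)\,\mathbb{E}(\|e^k\|_H^2)^{1/2} \\
&\leq C h (\triangle t)^{\nu_r/2}\Bigl(\int_{t_{k-1}}^{t_k}\eta(r)\,dr\Bigr)^{1/2}\mathbb{E}(\|e^k\|_H^2)^{1/2} + C h\,\triangle t\,\mathbb{E}(\|e^k\|_H^2)^{1/2},
\end{align*}
and on the first term I apply Young's inequality with weight $\triangle t$, producing $\tfrac{C h^2 (\triangle t)^{\nu_r-1}}{4}\int_{t_{k-1}}^{t_k}\eta(r)\,dr + \triangle t\,\mathbb{E}(\|e^k\|_H^2)$. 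Summation over $k = 1,\dots,m$ now yields the clean bound $C h^2 (\triangle t)^{\nu_r-1}\|\eta\|_{L^1(0,T)}$ instead of the $C h^2 (\triangle t)^{\nu_r-2}$ appearing in the proof of Theorem~\ref{err-est-g}; the analogous improvement applies to $IV_3$. All other terms are treated identically to the previous proof.

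Assembling the estimates, integrating over $\Omega$, summing for $k = 1,\dots,m$ and applying the discrete Gronwall Lemma~\ref{discreteGronwall} (after absorbing the implicit $\mathbb{E}(\|e^m\sqrt{|u^m_{hx}|}\|_H^2)$ contributions for $\triangle t$ small), I obtain
\begin{equation*}
\mathbb{E}(\|e^m \sqrt{|u^m_{hx}|}\|_H^2) \leq C\bigl(\mathbb{E}(\|e^0\sqrt{|u^0_{hx}|}\|_H^2) + \epsilon_W + (\triangle t)^{\nu_r} + h + h^2 (\triangle t)^{\nu_r-1}\bigr).
\end{equation*}
When $\nu_r = 1$ the residual $h^2(\triangle t)^{\nu_r-1} = h^2$ is already subordinate to $h$ with no coupling, while for $\nu_r < 1$ the choice $\triangle t = h$ gives $h^2(\triangle t)^{\nu_r-1} = h^{1+\nu_r}$, which is dominated by the terms already present. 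The claimed estimate then follows by the same triangle-inequality/Ritz splitting as at the end of the proof of Theorem~\ref{err-est-g}. The main obstacle I anticipate is the careful bookkeeping in $IV_1$ and $IV_3$: one must keep the $\eta(r)\,dr$ factor inside the stochastic expectation \emph{before} applying Young and the sum over $k$, otherwise one falls back onto the weaker assumption and reintroduces the loss of a $\triangle t$ factor.
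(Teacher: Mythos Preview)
Your proposal is correct and follows precisely the approach the paper outlines. The paper does not give a standalone proof of Theorem~\ref{err-est-g-2}; instead, the preceding remark explains that the only modification to the proof of Theorem~\ref{err-est-g} is to invoke Assumption~\ref{lass-cholder-strong} in place of Assumption~\ref{lass-cholder} in the estimate of term~$IV$, so that the factor $h^{2}(\triangle t)^{\nu_{r}-1}$ is \emph{maintained after summation} in the Gronwall argument thanks to $\sum_{k}\int_{t_{k-1}}^{t_{k}}\eta = \|\eta\|_{L^{1}(0,T)}$. This is exactly your treatment of $IV_{1}$ and $IV_{3}$, and your handling of the remaining H\"older-dependent terms $I_{3}$, $II_{2}$, $III$ (which already summed harmlessly under the weaker assumption) is consistent with that.
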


\begin{rem}

In the deterministic case of Remark~\ref{rem3.11} we essentially recover standard estimates as given in \cite[Theorem~2.4]{DE12}. There, a much higher regularity in time is assumed. For our time-integrated formulation, an estimate of the form $\| c(t) - c(\tau)\|_{W^{2,s}(S^{1})} \leq C |t-\tau|$ for the solution in place of \eqref{buba} 
could be used to improve the time error estimate stated in Theorem~\ref{err-est-g-2}).

We also note that thanks to the time-integrated formulation the need for discussing a material derivative and its discrete counterpart naturally disappears. This leads to a significant simplification in the error analysis.
\end{rem}

Finally observe that, although we have been working with moving curves in the plane (in accordance with the applications we had in mind, which typically occur in a co-dimension one setting), the analysis presented applies seamlessly to the case of embedded curves in $\R^{n}$.

\section{Numerical Simulations and Convergence Assessments}

We study problems that are inspired by Example 10.43 in \cite{Powell} and consider a stochastic reaction-diffusion equations of the form
\[
 dc = \Big{(} - c \frac{|u_x|_t}{|u_x|} + D \frac{1}{|u_x|} \Big{(} \frac{c_x}{|u_x|} \Big{)}_x + r(c) \Big{)} dt + \frac{1}{|u_{x}|} B(c) dW.
\]
Note that this equation is of the form \eqref{eq:modeleq} with $w_{T}=0$. Upon testing with $\varphi \in V$ and integrating with respect to time we obtain \eqref{eq:def_solution} with an additional deterministic reaction term of the form $\int_0^t \langle r(c(t')) |u_x(t')|, \varphi \rangle_{H} dt'$ on the right-hand-side.

For two examples we assess the convergence and related our findings to the theoretical results in Theorem \ref{err-est-g}. Before that we first explain how we implement the noise and how we assemble and solve the algebraic problems in each time step.

\subsection{Noise approximation}

In our numerical experiments we choose $B$ in such a way that we effectively discretize a SPDE with noise defined by a $\hat{Q}$-Wiener process. 

Let us give the idea of our reasoning first, before diving into definitions and computations. We take $U=H$, and let $\hat{Q} \in L(U)=L(H)$ be a non-negative, symmetric operator of finite trace. There exists an orthonormal basis $g_{l}$, $l \in \N$ (for instance, see \cite[Proposition~2.1.5]{RoecknerBuch}) of $U$ such that
$$ \hat{Q}g_{l} = b_{l} g_{l}, \quad l \in \N $$
with $b_{l} \geq 0$ and $tr(\hat{Q}) = \sum_{l} b_{l} < \infty$. We can then define a $\hat{Q}$-Wiener process by
$$ \hat{W}(t) = \sum_{l=1}^{\infty} \sqrt{b_{l}} g_{l} \beta_{l}(t), \qquad t \in [0,T], $$
where the $\beta_{l}(t)$ are independent real valued Brownian motions (for instance, see \cite[Proposition 2.1.10]{RoecknerBuch} for more details).

Now define $B(c): U \to H$ by $B(c) g_{l} = \sigma(c) \sqrt{b_{l}} g_{l}$ for all $l \in \N$
with some Lipschitz continuous and bounded function $\sigma : \R \to [0,\infty)$.
If $\sigma$ is constant then $B = \sigma \hat{Q}^{\frac{1}{2}}$ is independent of the state variable $c$ and satisfies $\|B\|^{2}_{L_{2}(U,H)} = \sigma^2 tr(\hat{Q}) < \infty$ (see \cite[Proposition~2.3.4]{RoecknerBuch}). Note that then \eqref{BH3},\eqref{BH4}, \eqref{BH2} trivially are satisfied.
The stochastic forcing term \eqref{def_stochreact2} then reads (at least formally)
\begin{multline*}
\int_{0}^{t} \langle B(c(t')) dW(t'), \varphi \rangle_{H} = \sum_{l \in \N} \int_{0}^{t} \langle B(c(t')) g_{l}, \varphi \rangle_{H} d \beta_{l}(t') \\
=\sum_{l \in \N} \int_{0}^{t} \langle \sigma \sqrt{b_{l}} g_{l}, \varphi \rangle_{H} d \beta_{l}(t') = \langle \int_{0}^{t} \sigma d\hat{W}(t'), \varphi \rangle_{H}.
\end{multline*}

In our experiments we choose $\hat{Q}$ to be defined through the following orthonormal basis for $H$:
\begin{align}\label{gBasis}
 g_1(x) = \frac{1}{\sqrt{2\pi}}, \quad g_{2n}(x) = \frac{1}{\sqrt{\pi}} \sin(nx), \quad g_{2n+1}(x) = \frac{1}{\sqrt{\pi}} \cos(nx), \qquad n \in \N.
\end{align}
Note that the maps $g_{l}$, $l \in \N$, are the $L^2$-normalized eigenfunctions of the operator $Av = - v_{xx}$ with periodic boundary conditions.

With regards to the coefficients we choose $b_{l} = n^{-2\bar{r}-1}$, $l \in \{ 2n, 2n+1 \}$, in our computations, where $\bar{r}>0$ is a decay rate. We are not able to theoretically verify the regularity assumption~\eqref{ass-cholder}. However, we note that \cite[Theorem 2.31]{Kruse14} establishes temporal regularity for mild solutions to semilinear stochastic evolution equations, for which \eqref{ass-cholder} is satisfied, and which therefore motivates our choices of the $b_l$ and $\bar{r}$ in the examples below.

\subsection{Matrix-vector formulation}

For simplicity we choose equidistant nodes $x_{j}$, so that $h_{j}=h$ for all $j$.
Recall that, in each time step, the system \eqref{eq:linsys} has to be solved, which is obtained by choosing $\varphi_{h} = \phi_i$ in \eqref{eq-3.6-g}. Let $q^{k}_{j} = |u(t_{k},x_{j}) - u(t_{k},x_{j-1})|$ for all $j$ (spatial indices modulo $N$) and $k$. Short calculations show that
\[
 M^{k}_{i,j} =
 \begin{cases}
 (q_{j}^{k} + q_{j+1}^{k}) / 3, & \\
 q_{j}^{k}/6, & \\
 q_{j+1}^{k}/6, & \\
 0, &
 \end{cases}
 \qquad
 S^{k}_{i,j} =
 \begin{cases}
 1/q_{j}^{k} + 1/q_{j+1}^{k} & \qquad \quad i=j, \\
 -1/q_{j}^{k} & \qquad \quad i=j-1, \\
 -1/q_{j+1}^{k} & \qquad \quad i=j+1, \\
 0 & \qquad \quad \mbox{otherwise. }
 \end{cases}
\]
If $\sigma(c)$ is not constant then we make the approximation
\begin{multline*}
 \langle \Bapprox(c^{k-1}_{h}) \triangle W_{k}, \phi_{i} \rangle_{H}
 = \sum_{l=1}^{L} \langle \sigma(c^{k-1}_{h}) \sqrt{b_{l}} g_{l}, \phi_{i} \rangle_{H} \triangle \beta_{l,k} \\
 \approx \sum_{l=1}^{L} \langle \sqrt{b_{l}} g_{l}, I_{h} (\sigma(c^{k-1}_{h}) \phi_{i}) \rangle_{H} \triangle \beta_{l,k}
 = \sum_{l=1}^{L} \langle \sqrt{b_{l}} g_{l}, \sigma(c_{i}^{k-1}) \phi_{i} \rangle_{H} \triangle \beta_{l,k}.
\end{multline*}
If $l = 2n$ then
\[
 \sqrt{\pi} \langle g_{l}, \phi_{i} \rangle_{H} = \langle \sin(n \cdot), \phi_{i} \rangle_{H}
 = \frac{1}{n^2 h} \big{(} - \sin(n x_{i-1}) + 2 \sin(n x_{i}) - \sin(n x_{i+1}) \big{)},
\]
and, similarly, if $l = 2n+1$ (then $\sin$ is replaced by $\cos$)
for any $n \in \N$.

Assuming that $w_T = 0$ we therefore obtain for the right-hand-side of the system \eqref{eq:linsys} that
\begin{align*}
f_{i}
&= \langle c^{k-1}_{h} | u^{k-1}_{hx}|, \phi_{i} \rangle_{H}
+ \langle \Bapprox(c^{k-1}_{h}) \triangle W_{k}, \phi_{i} \rangle_{H} \\
&= \sum_{j=1}^N M^{k-1}_{i,j} \cdot c^{k-1}_{j}
+ \sum_{l=1}^L \langle g_{l}, \phi_{i} \rangle_{H} \sqrt{b_{l}} \sigma(c_{i}^{k-1}) (\beta^{k}_{l} - \beta^{k-1}_{l}).
\end{align*}

\subsection{Convergence as $\triangle t \to 0$}
\label{sec:LPSevolving}

\begin{figure}
\begin{center}
 \includegraphics[width=15cm]
 {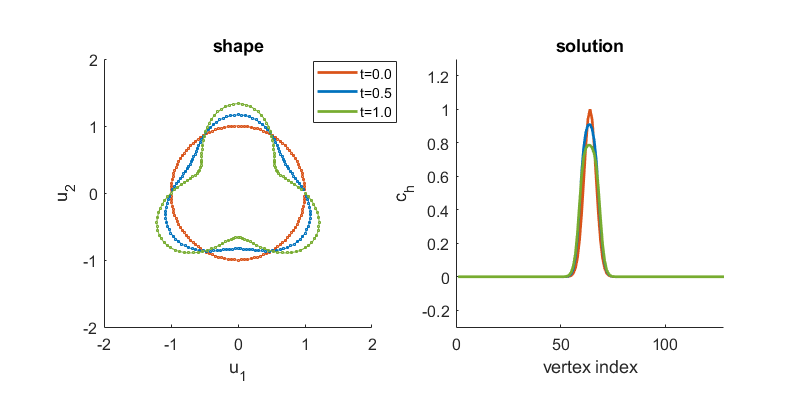}
\end{center}
\caption{Illustration of the solution for the problem in Section \ref{sec:LPSevolving}. The shape of the evolving curve $u$ is displayed at several times on the left using $N = 128$ vertices. On the right, a numerical sample path has been computed with the parameters $\triangle t = 10^{-4}$, $L = 259$, $\bar{\sigma} = 0.5$, $\bar{r} = 1.0$ and is displayed at the same times as the shape. Note that the solution is plotted over the vertex indices $i=1,\dots N$ rather than the corresponding vertex positions $x_i = 2 \pi \, i/N$ in the spatial domain.
}
\label{fig:LPSevolving}
\end{figure}

In the case of sufficient noise we expect to see convergence rates $\frac{\nu_r}{2} < \frac{1}{2}$ as predicted in Theorem \ref{err-est-g}.
This is due to the time discretization, noting that $h$ features with the power one on the right-hand-side in the estimate. To assess this expectation we therefore fix the spatial mesh and compute convergences rates as the time step size $\triangle t \to 0$.

The specific problem that we consider for the purpose is inspired by Example 10.43 in \cite{Powell} but on a closed evolving curve. We set $D = 0.001$ and $r(c) = c (1-c) (c+0.5)$. The parametrisation of the curve is given by
\[ 
u(t,x) = (1-t/3) \sin(3x) (\cos(x),\sin(x)), \quad (t,x) \in [0,T] \times [0,2\pi).
\] 
The parameters in the noise process are motivated by Example 10.10 in \cite{Powell}.
Recalling \eqref{gBasis} and that $B(c)g_{l} = \sigma(c) \sqrt{b_{l}} g_{l}$ we set $\sigma(c) = \max \{ \bar{\sigma} c(1-c), 0 \}$,
$b_{1} = 1$, and $b_{l} = n^{-2\bar{r}-1}$, $l \in \{ 2n, 2n+1 \}$, $n \in \N$. For the noise strength and noise decay parameters we choose $\bar{\sigma} = 0.5$ and $\bar{r} = 1.0$, respectively. The initial data are deterministic and given by
\[
 c_{0}(x) = \exp \big{(} - \tfrac{1000}{4 \pi^2} (x-\pi)^2 \big{)}, \quad x \in [0,2\pi).
\]
For the computations these are interpolated on the spatial mesh, i.e., $c_{h}^0 = I_h (c_{0})$. Figure \ref{fig:LPSevolving} gives an impression of the evolving geometry and the solution.

We compute $S = 100$ (the results below are robust with respect to this choice) samples paths of a reference solution $c_{ref}$ on a fixed mesh with $N = 128$ vertices, with $\triangle t_{ref} = 10^{-5}$. 

With regards to the noise truncation parameter $L$ we note that, usually, (for instance, see \cite{Powell,BDE}) the dimension of the finite element space is used ($L = N$). This is motivated by the fact that frequencies up to that dimension can be resolved by the finite element mesh. However, we here have periodic boundary conditions so that both $\sin$ and $\cos$ functions feature. As a consequence, the frequency of $g_l$ is half the index ($l/2$ or $(l-1)/2$). We therefore choose twice as many noise terms, more precisely, $L = 2N+1 = 259$. We have performed computations with other values and generally observed that this is a good cut-off.

The solutions at the final time $T = 1$ after $M_{ref} = T / \delta t_{ref} = 10^5$ steps are denoted by $c_{ref}^{M_{ref}}(\omega_s)$ where $\omega_s \in \Omega$ stands for the $s$-th sample path.
Next, we compute the same sample paths for the time step sizes $\triangle t = p \triangle t_{ref}$ with $p \in \{ 500, 200, 100, 50, 20, 10, 5 \}$ and, for each sample path at the final time $c_{h}^{M}(\omega_s)$, $M = T/\triangle t$, the $L^2$ distance to the corresponding sample path of the reference solution. We use the average as a measure for the error due to the time step size:
\begin{equation} \label{eq:def_num_err}
 E_S(\triangle t) = \frac{1}{S} \sum_{s=1}^S (\| c^{M}_{h}(\omega_s) - c^{M_{ref}}_{ref} (\omega_s) \|_{H}^{2})^{\frac{1}{2}}.
\end{equation}

\begin{table}
\begin{center}
\begin{tabular}{|l|l|l|} \hline
$\triangle t$ & $E_S$ & $\eoc$ \\ \hline \hline
     0.005   &  4.4034e-04  &         -- \\ \hline
     0.002   &  2.6045e-04  &    0.57309 \\ \hline
     0.001   &  1.8081e-04  &    0.52659 \\ \hline
     0.0005  &  1.2459e-04  &    0.53728 \\ \hline
     0.0002  &  7.9164e-05  &    0.49492 \\ \hline
     0.0001  &  5.1203e-05  &    0.62861 \\ \hline
     5e-05   &  3.7661e-05  &    0.44315 \\ \hline
\end{tabular}
\end{center}
\caption{Errors and experimental orders of convergence for the example describes in Section \ref{sec:LPSevolving}.
The error $E_S$ is given by \eqref{eq:def_num_err}.
}
\label{tab:conv_LPSevolving}
\end{table}

Table \ref{tab:conv_LPSevolving} displays the errors and the corresponding experimental orders of convergence. These indeed are around $0.5$ as in \cite[Example 10.43]{Powell}. In contrast, the eocs are around $1.0$ in the deterministic case ($\bar{\sigma} = 0$).

\subsection{Convergence as $h \sim \triangle t \to 0$}
\label{sec:wavefront_shrinking}

\begin{figure}
\begin{center}
 \includegraphics[width=15cm]
 {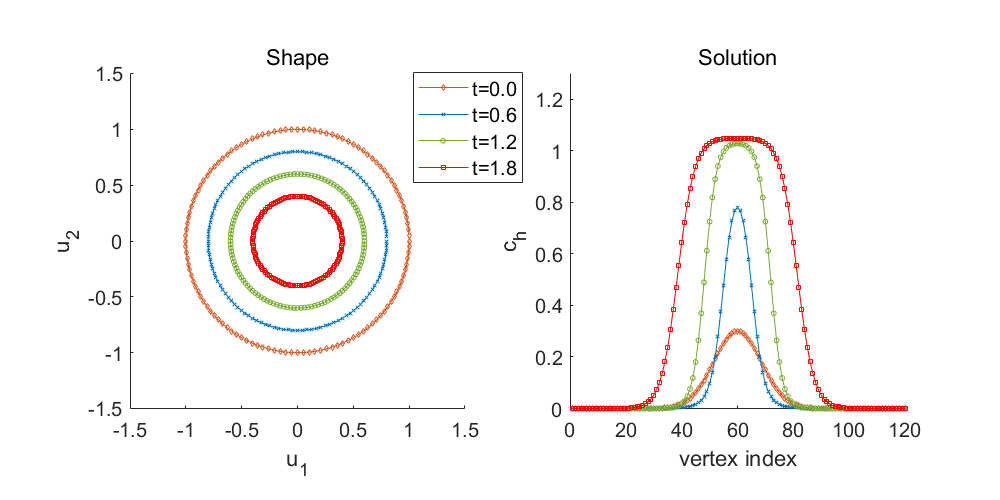}
\end{center}
\caption{Illustration for the problem in Section \ref{sec:wavefront_shrinking}. The shape of the evolving curve $u$ is a shrinking circle
and displayed at several times on the left.
On the right, the numerical solution ($N=120$, $\triangle t = 10^{-4}$) of the deterministic ($\bar{\sigma}=0$) problem at the same times is displayed. Note that it is plotted over the vertex indices $i=1,\dots N$ rather than the corresponding vertex positions $x_i = 2 \pi \, i/N$ in the spatial domain.
The solution indicates that the initial signal $c_{0}$ is amplified and then leads to outwards moving fronts between domains where the solution is close to zero or one, respectively.
}
\label{fig:Deterministic}
\end{figure}

\begin{figure}
\begin{center}
 \includegraphics[width=7cm]
 {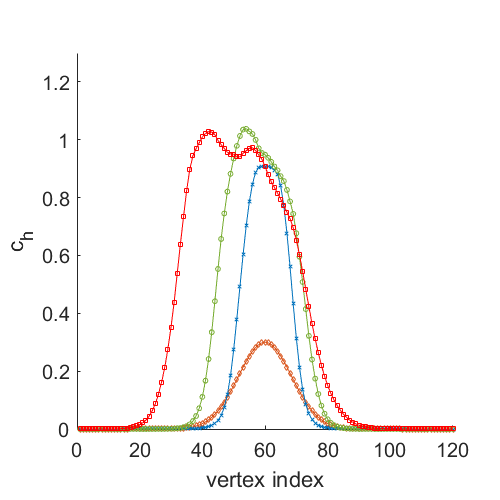}
 \includegraphics[width=7cm]
 {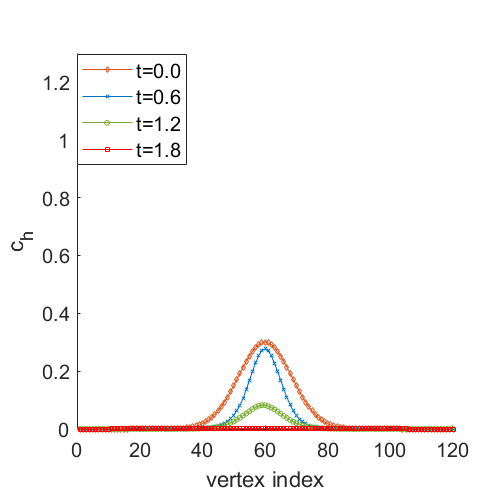}
\end{center}
\caption{Two numerically computed sample paths for the stochastic PDE in Section \ref{sec:wavefront_shrinking} plotted of the vertex indices $i=1, \dots, N$. The parameters are $\bar{r} = 0.75$, $\bar{\sigma} = 0.5$ $N = 120$, $\triangle t = 10^{-4}$, $L = 301$. We observe that the initial signal can get amplified with a domain being formed where the value of $c_h$ is around one (left). Fronts between the two domains also still are noticeable. However, it can also happen that $c_h$ vanishes (right).}
\label{fig:Stochastic}
\end{figure}

We now investigate the convergence behaviour if both the spatial and the time step size decay at the same rate, as it is required for the estimate in Theorem \ref{err-est-g}. We consider a different problem by setting $D = 0.05$ and $r(c) = 20 c(1-c)(c-0.25)$.
For the shape we consider a self-similarly shrinking circle,
\[ 
u(t,x) = (1-t/3) (\cos(x),\sin(x)), \quad (t,x) \in [0,T] \times [0,2\pi),
\] 
so that $|u_x(t,x)| = 1-t/3$.
Regarding the parameters in the noise process we set $b_{1} = 0$, and $b_{l} = n^{-2\bar{r}-1}$, $l \in \{ 2n, 2n+1 \}$. Furthermore $\sigma(c) = \min \{ \max \{ \bar{\sigma} c, 0 \}, 100 \}$. The values of $\bar{\sigma} \geq 0$ and $\bar{r} > 0$ vary.
We chose the (deterministic) initial data
\[
 c_{0}(x) = 0.3 \exp \big{(} - \tfrac{100}{4 \pi^2} (x-\pi)^2 \big{)}, \quad x \in [0,2\pi)
\]
and interpolate them to start the computations.

Solutions to the deterministic equation often form large patches where $c \approx 0$ or $c \approx 1$ that are separated by layers moving such that those where $c \approx 1$ increase. For the specific initial data, Figure \ref{fig:Deterministic} gives an impression of the solution. If we add the multiplicative noise term then often a domain where $c \approx 1$ and layers still can be observed, but it can also happen that $c$ vanishes in the long run. Figure \ref{fig:Stochastic} displays two sample paths to give an idea of possible outcomes. These computations were done with $N = 120$ mesh points $x_i = 2 \pi \, i/N$, $i=1,\dots,N$,
and on the time interval $[0,T] = [0,1.8]$ with time step size $\triangle t = 0.001$ and, in the noisy cases, with $\bar{\sigma} = 0.5$, $\bar{r} = 0.75$, and $L = 301$.

We proceed as previously and use reference solutions to assess the errors. These are computed with two different spatial step sizes $h_{ref} = 2 \pi / N_{ref}$, namely for $N_{ref} \in \{ 1200, 9600 \}$. We choose $L = 2N_{ref}+1$ for the truncation of the noise in all computations (this choice was motivated in the previous section \ref{sec:LPSevolving}). Setting the final time to $T = 0.6$ we use $M_{ref} = 600$ time steps so that $\triangle t_{ref} = 0.001$. We write $c_{ref}^{M_{ref}}(\omega_s)$ for the $s$-th sample path, $s = 1, \dots, S$ where $S = 100$. The same samples paths then are computed again for the step sizes $\triangle t = p \triangle t_{ref}$ and $h = p h_{ref}$ for $p \in \{ 3, 4, 5, 6, 8, 10 \}$. This means that, if $N_{ref} = 1200$ then $h \approx 5.236 \triangle t$, and if $N_{ref} = 9600$ then $h \approx 0.655 \triangle t$. The errors are approximated as in \eqref{eq:def_num_err} (average of the $L^2$ distance to the reference solution at the final time) where we interpolate the numerical solutions to the reference mesh and compute the $L^2$ integral exactly (modulo rounding errors).

\begin{table}
\begin{center}
\begin{tabular}{|l|l|l|l|} \hline
$h$ & $\triangle t$ & $E_S$ & $\eoc$ \\ \hline \hline
     0.05236  &       0.01  &     0.012407  &         --  \\ \hline
     0.041888 &       0.008 &     0.0092946 &      1.2944 \\ \hline
     0.031416 &       0.006 &     0.0075923 &      0.7032 \\ \hline
     0.02618  &       0.005 &     0.0058762 &      1.4053 \\ \hline
     0.020944 &       0.004 &     0.0045545 &      1.1418 \\ \hline
     0.015708 &       0.003 &     0.0034464 &      0.9691 \\ \hline
\end{tabular} \hfill
\begin{tabular}{|l|l|l|l|} \hline
$h$ & $\triangle t$ & $E_S$ & $\eoc$ \\ \hline \hline
     0.006545 &       0.01  &     0.011159  &         --  \\ \hline
     0.005236 &       0.008 &     0.010083  &      0.4544 \\ \hline
     0.003927 &       0.006 &     0.0077323 &      0.9228 \\ \hline
     0.003273 &       0.005 &     0.0061825 &      1.2268 \\ \hline
     0.002618 &       0.004 &     0.0055465 &      0.4865 \\ \hline
     0.001964 &       0.003 &     0.0040649 &      1.0803 \\ \hline
\end{tabular}
\end{center}
[\caption{Errors and experimental orders of convergence for the example describes in Section \ref{sec:wavefront_shrinking}. Left table: results for $N_{ref} = 1200$. Right table: results for $N_{ref} = 9600$. The error $E_S$ is given by \eqref{eq:def_num_err}.
}
\label{tab:conv_wavefront_shrinking}
\end{table}

Table \ref{tab:conv_wavefront_shrinking} displays the errors and eocs. The latter generally seem a bit higher if $N_{ref} = 1200$. The errors support this observation. If $N_{ref} = 1200$ then halving the step size more than halves the error; for instance, it is $0.0092946$ for $\triangle t = 0.008$ and becomes $0.0045545$ for $\triangle t = 0.004$. In turn, if $N_{ref} = 9600$ then the error does not quite halve when halvening the step size; for instance, from $0.010083$ for $\triangle t = 0.008$ it goes down to $0.0055465$ for $\triangle t = 0.004$ only.

Recalling again the convergence result in Theorem \ref{err-est-g} the convergence is at most linear in $h$ and $\frac{\nu_r}{2} < 1$ in $\triangle t$. We interpret the above findings as follows. If $h$ is relatively large with respect to $\triangle t$ (such as in the case $N_{ref} = 1200$ then the spatial discretization error is dominating so that linear convergence is observed as $h \sim \triangle t \to 0$ at the same rate. In turn, if $h$ is relatively small in comparison to $\triangle t$ then the time discretization error is dominating so that slower convergence is observed. In the case $N_{ref} = 9600$ we are in that convergence regime.

Finally, we also want to get an idea how likely it is that the initial signal vanishes in the long run as on the right in Figure \ref{fig:Stochastic}. We perform $S = 1000$ simulations for the same data ($\bar{r} = 0.75$, $\bar{\sigma} = 0.5$, $N = 120$, $\triangle t = 10^{-4}$, $L = 301$). If the spatial $L^2$ norm a the final time $T = 1.8$ of the sample is $\leq 0.1$ we deem the signal to have vanished. In our computations this happened in 22.2\% of the cases.

\bibliography{refs}
\bibliographystyle{acm}

\end{document}